\definecolor{ludmilscolor}{RGB}{255,0,255}
\DeclareMathAlphabet\mathbold{OML}{cmm}{b}{it}
\definecolor{black}{rgb}{0,0,0}
\numberwithin{equation}{section}
\newtheorem{theorem}{Theorem}[section]
\newtheorem{lemma}{Lemma}[section]
\newtheorem{remark}{Remark}[section]
\newtheorem{definition}{Definition}[section]
\newtheorem{algorithm}{Algorithm}[section]
\theoremstyle{definition}\newtheorem{example}{Example}[section]
\renewcommand{\div}{\operatorname{div}}
\newcommand{\RT}[1]{{\mathcal{RT}}_{#1}} 
\newcommand{\BDM}[1]{{\mathcal{BDM}}_{#1}} 
\newcommand{\ddiv}{\operatorname{div}}
\newcommand{\Bdiv}{B_{\operatorname{div}}}
\def\b1{{\mathbf 1}}
\def\bv{{\mathbf v}}
\def\bu{{\mathbf u}}
\def\bw{{\mathbf w}}
\def\bb{{\mathbf b}}
\def\bn{{\mathbf n}}
\def\bH{{\boldsymbol H}}
\def\bV{{\boldsymbol V}}
\def\bVh{{\boldsymbol V}_{\hspace{-0.2mm}h}}
\def\bHN{{\boldsymbol H}_{\hspace{-0.2mm}N}}
\def\bbR{\mathbb{R}}
\def\Mal{{M}_\al}
\newcommand{\R}{{\mathcal R}}
\newcommand{\A}{{\mathcal A}}
\newcommand{\vek}[1]{{\bf{#1}}}
\newcommand{\Reals}[1]{{\bbR}^{#1}}
\newcommand{\GCG}{{\rm GCG,\nu}}
\def\al{{\alpha}}
\def\Om{{\Omega}}
\title[Preconditioning of weighted $\bH(\ddiv)$-norm and applications]
{Preconditioning of weighted $\bH(\ddiv)$-norm and applications to numerical simulation of
highly heterogeneous media}
\author[J.~Kraus, R.~Lazarov, M.~Lymbery, S.~Margenov, L.~Zikatanov]
{Johannes Kraus, Raytcho  Lazarov, Maria Lymbery, Svetozar Margenov, and Ludmil Zikatanov}
\address{Faculty of Mathematics, University of Duisburg-Essen, Thea-Leymann-Str. 9 45127 Essen, Germany}
\email{johannes.kraus@uni-due.de}
\address{Department of Mathematics,
Texas A \& M University, College Station, TX 77843, USA and Institute of Mathematics
and Informatics, Bulgarian Academy of Sciences, Acad. G. Bonchev St., Bl. 8,  1113 - Sofia, BULGARIA}
\email{lazarov@math.tamu.edu}
\address{Faculty of Mathematics, University of Duisburg-Essen, Thea-Leymann-Str. 9 45127 Essen, Germany}
\email{maria.lymbery@uni-due.de}
\address{Institute of Information and Communication Technologies, Bulgarian Academy of Sciences,
Acad. G. Bonchev St., Block 2,  1113 - Sofia, BULGARIA}
\email{margenov@parallel.bas.bg}
\address{Department of Mathematics,
The Pennsylvania State University, University Park, PA 16802, USA and Institute of Mathematics
and Informatics, Bulgarian Academy of Sciences, Acad. G. Bonchev St., Bl. 8,  1113 - Sofia, BULGARIA}
\email{ltz@math.psu.edu}
\keywords{mixed finite elements,  high contrast media, robust preconditioners
for weighted $\bH(\ddiv)$-norm, discrete Poincar\'e inequality}
\subjclass{65F10, 65N20, 65N30}
\date{February 26, 2013--beginning; Today is \today}
 \thanks{}
\begin{document}

\begin{abstract}
In this paper we propose and analyse  a preconditioner for a system arising from a mixed finite element
approximation of second order elliptic problems describing processes in
highly heterogeneous media. 
Our approach uses the technique of multilevel methods (see,
e.g. \cite{2008VassilevskiP-aa}) and the recently proposed
preconditioner based on additive Schur complement approximation by
J. Kraus \cite{Kraus_12}. The main results are  the design, study, 
and numerical justification of iterative algorithms for
such problems that are robust with respect to the contrast of the
media, defined as the ratio between the maximum and minimum values of
the coefficient of the problem. The numerical tests provide an
experimental evidence of the high quality of the preconditioner and its desired robustness
with respect to the material contrast. Numerical results for several 
representative cases are presented, one of which is 
related to the SPE10 (Society of Petroleum Engineers) benchmark problem.

\end{abstract}

\maketitle


\section{Introduction}\label{sec:intro}
\subsection{Model problem definition}\label{ssec:model}
Flows in porous media appear in many industrial, scientific, engineering and
environmental applications and are a subject of significant scientific interest. 
The same mathematical formulation is also used in modelling of 
other  physical processes such as heat and mass
transfer, diffusion of passive chemicals and electromagnetics. 
This leads to the following system of partial differential equations of first order 
for the unknown scalar functions $p(x)$ and the vector function $\bu(x)$:
\begin{subequations}\label{equation-1}
\begin{alignat}{2}
 \bu + K(x) \nabla p &= 0 \qquad && \text{in $\Omega$,}\\
\label{equation-2}
\ddiv \bu  &= f && \text{in $\Omega$},\\
\label{D BC}
p &= g && \text{on $\Gamma_D$ , } \\
\label{N BC}
\bu \cdot \bn  &=0 && \text{on $\Gamma_N$, }
\end{alignat}
\end{subequations}
where $\Omega$ is a polygonal  domain in $\mathbb{R}^d$, $d=2,3$. 
In the terminology of flows in porous media the unknown scalar functions $p(x)$ 
and the vector function $\bu$ are called pressure and velocity respectively, 
while $K(x): \mathbb{R}^d\mapsto \mathbb{R}^{d\times d}$, called 
the permeability tensor, is a symmetric and 
positive definite (SPD) matrix for almost all $x \in \Omega$. 
The first equation is the Darcy law and the second equation expresses conservation of mass.

Our study is focused on the case $K(x) = k(x) I$, where  
$I$ is the identity matrix in $ \mathbb{R}^d$  and $k(x)$ is a scalar function.
 The given forcing term $f $ is function in $ L^2(\Omega)$. 
The boundary $\partial \Omega $ is split into two
 non-overlapping parts $\Gamma_D$ and $\Gamma_N$ and in the case of a pure
 Neumann problem, i.e. $\Gamma_N=\partial \Omega$, we assume that $f$
 satisfies the compatibility condition $\int_\Omega f dx=0$. In such a
 case the solution is determined uniquely by taking $\int_\Omega p ~dx=0$.

To simplify the presentation, $\Gamma_D$ is assumed to be a
 non-empty set with strictly positive measure which is also closed
 with respect to $\partial \Omega$ and $g(x) \equiv 0$ on $\Gamma_D$, so the above system of equations
 has a unique solution $ p \in H_D^1(\Omega):=\{ q \in H^1(\Omega): ~ q =0 \,\, \mbox{on} \,\, \Gamma_D\}$.

Specifically, applications 
to  flows in {\it highly heterogeneous} porous media of {\it high contrast} are studied.
The coefficient $k(x)$ in this context represents media with multiscale features, involving
many small size inclusions and/or long connected subdomains (channels), where $k(x)$ has large 
values (see Figure \ref{fig:islands_random}). 
A computer generated permeability coefficient $K(x)$ which exhibits such
features has been used as a benchmark in petroleum engineering related simulations,
cf. SPE10 Project~\cite{SPE10_project}. 
In Figure \ref{fig:spe_10} is shown the permeability field of 2-dimensional slices of such media.
An important characteristic is the contrast $\kappa$,
defined by \eqref{contrast} as a ratio between the maximum and minimum values of  $k(x)$.

In this paper we consider  approximations of the problem
\eqref{equation-1} by the mixed finite element method on a mesh that resolves the finest scale of
the permeability. This leads to a very large indefinite symmetric system of algebraic equations. 
Developing, studying and testing an optimal preconditioner with respect to the contrast ${\kappa}$ and the mesh
size $h$  for this algebraic problem is the objective of this paper.  
Our considerations and numerical experiments 
show that the proposed preconditioner  
is optimal so that the number of iterations
depends neither on the contrast nor the mesh size.
This is the main achievement in this paper.

For the vector variable $\bu$ we use the lowest order Raviart-Thomas $\bH(\ddiv)$-conforming finite elements.  
The algebraic  system of linear equations
for the unknown degrees of freedom associated with 
$\bu$ and $p$ can be written in the following block form
(see also for more details Subsection~\ref{sec:matrix-notation}) 
\begin{equation}\label{eq:saddle_point_sys-intro}
 \left[
\begin{array}{cc}
\Mal & -\Bdiv^T \\
-\Bdiv & 0
\end{array}
\right]
\left[
\begin{array}{c}
 {\bf u} \\
 {\bf p}
\end{array}
\right]=
\left[
\begin{array}{c}
 \bf{0} \\
 \bf{f}
\end{array}
\right],
\end{equation}
where the matrix $\Mal$ is generated by the inner product $(\al \bu, \bv)$ while 
$B_{\ddiv} $ by the form $(\nabla \cdot \bu, q)$. 
It is well known (see, e.g. \cite{Arnold1997preconditioning}) that  the mapping properties 
of a matrix of this system are the same as those of 
\begin{equation}\label{eq:AFW_preconditioner_intro}
 \mathcal{B}_h:=\left[
\begin{array}{cc}
 A & 0\\[2ex]
 0 & I
\end{array}
\right],
\end{equation} 
where the matrix $A$ corresponds to the 
weighted $\bH(\ddiv)$-inner product $(\al \bu, \bv) + (\ddiv \bu, \ddiv \bv)$. 
Therefore, for an optimal MINRES iteration, the construction of an efficient 
preconditioner of the bilinear form  $(\al \bu, \bv) + (\ddiv \bu, \ddiv \bv)$ 
which is robust with respect to both the contrast and the mesh-size is essential. 
In this paper we focus on the construction and study of a suitable 
$A$ in \eqref{eq:AFW_preconditioner_intro}. 

\subsection{Overview of existing results}\label{ssec:existing}

The standard elliptic theory ensures the existence of 
a unique solution $p \in H^1_D(\Om)$. However, since the
coefficient matrix $K(x)$ is piece-wise smooth and may have very large
jumps, the solution $p$ has low regularity. For example, the case $H^{1+s}(\Om)$
where $s>0$ could depend on the contrast $\kappa$ in a subtle and
unfavourable manner. 
This must be taken into account when proving the stability of 
discrete methods with a  constant independent of
$\kappa$. As a consequence, any solution or preconditioning technique,
such as multigrid and domain decomposition that are analysed by 
using the solutions' regularity, cannot produce theoretical results
independent of the contrast.

Note that block $A$ corresponds to the finite element approximation of the
weighted $\bH(\ddiv)$-norm generated by the weighted inner product 
$(K^{-1}\bu, \bv) + (\div \bu, \div \bv)$ with
$\bH(\ddiv)$-conforming finite elements. Thus, one might expect that the existing preconditioners of 
$\bH(\ddiv)$-norms would be appropriate to begin with.  
Various  scenarios for the properties of $K(x)$ are possible.

{\it Constant $K$ and/or smooth variable $K(x)$.} 
The case of 
$K(x)$ being an SPD matrix over $\Omega$ has been considered by Arnold, Falk, and Winther in
\cite{Arnold1997preconditioning, Arnold2000MG} and the corresponding preconditioner (based on 
mutigrid and/or domain decomposition) was
shown to be optimal with respect to the mesh-size in two space dimensions. 
The analysis of the preconditioner relies on the approximation properties
of the Raviart-Thomas projection and requires full regularity of the solution.
Further, based on early work by Vassilevski and Wang 
\cite{VassilevskiWang1992}, Hiptmair \cite{Hiptmair_2001} and later Hiptmair and Xu \cite{hiptmair2007nodal}
developed a preconditioner for the $\bH(\ddiv)$-norm that is optimal with respect to the mesh-size.
This work does not consider the variable $K(x)$ and a weighted norm. Nevertheless 
its analysis can be potentially extended to this case. However, the theoretical justification 
of this preconditioner uses in a fundamental way the approximation properties of finite element 
projections  (Raviart-Thomas  in 2-D and Nedelec in 3-D) that require regularity of the 
vector field  $\bu$, see e.g. \cite[error bounds (2.3) -- (2.5)]{Hiptmair_2001}, which 
may depend in a unfavourable way on the contrast $\kappa$. 
In general, such regularity is not available for problems in highly heterogeneous media with 
large contrast.  Additionally, the main ingredient of the preconditioner in~\cite{hiptmair2007nodal}, 
stable regular decompositions,
requires an extension of these results to the case of weighted norms. To the best of our knowledge, such
results are still out of reach for highly heterogenous coefficients and the analogues of Lemma~3.4 and Lemma~3.8
from \cite{hiptmair2007nodal}, crucial for constructing preconditioners, are not yet available.

{\it Anisotropic coefficient matrix $K(x)$.}
Often in applications the coefficient matrix represents anisotropic media or heterogeneous media
with highly anisotropic inclusions. Solving linear systems resulting from finite element approximation
of such problems is not a fully mastered task. Moreover, a theoretical 
justification of iterative solvers that are robust with respect to anisotropy is a very
difficult task. Some of early work in multigrid 
for two-dimensional problems, see, e.g. \cite{Hiptmair_2001,Bramble_Zhang_aniso}, 
uses grids that are aligned with the anisotropy. In \cite[conditions (1.2) -- (1.4)]{Bramble_Zhang_aniso}, 
a certain  coefficient regularity is required, while in \cite{Hiptmair_2001} it is assumed that the 
grid is aligned with the anisotropy of the coefficient $K(x)$  and the  line-relaxation in the strongly coupled direction 
is combined with semi-coarsening in the other directions only.

{\it Variable $K(x)$ with high contrast $\kappa$.}
The design and analysis of condition numbers for
  the system \eqref{eq:saddle_point_sys-intro} preconditioned with
  block diagonal preconditioners were carried out by Powell and
  Silvester~\cite{powell2003optimal} and
  Powell~\cite{powell2005parameter}. One class of preconditioners
  proposed in these two papers, and relevant to our constructions here, is
  a block diagonal preconditioner with weighted $H(div)$ operator as
  one diagonal block, and lumped (weighted) mass matrix as the
  other. The practical preconditioners which result from this use
  approximations of the $H(div)$ problem with the $V$-cycle multigrid
  proposed by Arnold, Falk, and
  Winther~\cite{Arnold1997preconditioning}.  In case of a smooth
  coefficient matrix $K(x)$ such an approach produces an optimal
  preconditioner for the mixed system in the isotropic case,
  independent of the contrast $\kappa$ as seen in~\cite[Section~2.3.1,
  Tables~2.3--2.7]{powell2003optimal} and
  \cite[Tables~4,~5]{powell2005parameter}.  In other cases, matrix
  valued anisotropic coefficients and highly heterogenous coefficients
  not aligned with the grid, the resulting preconditioners are robust
  with respect to the coefficient variation whenever the approximation
  to the first diagonal block is. It is still an open theoretical
  question, however, whether any of the known multigrid algorithms for
  the weighted $H(div)$ problem converge uniformly with respect to
  both $h$ and the coefficient variation for all cases, and 
  particularly in cases of matrix valued and anisotropic coefficients,
  coefficients with discontinuities not aligned with the coarsest
  mesh.  This statement applies to \emph{both} algebraic and geometric
  multigrid methods.  Multigrid preconditioners such as Arnold, Falk,
  Winther MG and the Hiptmair Xu (HX) preconditioner, perform well in
  numerical tests and in some cases. However the mathematical theory
  confirming such numerical observations is still missing.

  Furthermore, the framework for practical preconditioning of
  Powell and Silvester~\cite{powell2005parameter} and
  Powell~\cite{powell2003optimal} can be combined the Schur complement
  preconditioning of the $H(div)$ block as proposed here.  While we
  are also missing aspects in the rigorous mathematical
  justification of such a result, the numerical tests presented later
  and the analysis in~\cite{powell2005parameter},
  and~\cite{powell2003optimal} show that such combined approach has a
  great potential for being successful and practical. 


{\it Highly heterogeneous discontinuous $K(x)$.}
In the existing literature there are a number of techniques for preconditioning algebraic problems 
with heterogeneous coefficients of high contrast or large jumps. 
Among the most popular are domain decomposition, e.g. \cite{klawonn2002dual, efendiev2012robust}, 
for the standard Galerkin FEM,
and multilevel methods for the hybridised mixed system, e.g. \cite{Kraus_12}. 
The main result in  \cite{klawonn2002dual} concerns a domain decomposition FETI-type 
preconditioner which is optimal with respect to the contrast in the case when 
jumps of $K(x)$ are aligned with the coarse mesh
(or the splitting of the domain into subdomains).
Similarly, the proposed preconditioners in \cite{Kraus2009robust},  based on algebraic 
multilevel methods (AMLI), are theoretically proven to be robust with respect to  
contrast in the case when jumps of the coefficients $K(x) $ are aligned 
with the initial coarse mesh. The results shown in~\cite[Table 7.10, p.~163]{Kraus2009robust}
demonstrate numerically that for highly heterogeneous media with jumps in the 
permeability $K(x)$ aligned with the fine mesh only, 
the AMLI-preconditioner is not robust with respect to the contrast. 
In a recent work,   \cite{Willems_SINUM_2014},  J. Willems has developed with respect to the problem parameters 
a robust nonlinear multilevel method for solving
general symmetric positive definite systems.  A crucial role in the construction of the 
nested spaces and the smoother is played by local generalised eigenproblems (in the style of \cite{galvis2010domain})
and  four assumptions. It is not clear when these are verifiable for the case of the 
form $\Lambda_\al$.

We share the opinion expressed in \cite[Section 6]{powell2005parameter}, 
that the existence of theoretically proven optimal precondtioners 
of the $\|\cdot \|_{\Lambda_\al}$-norm (defined by the weighted 
$\bH(\ddiv)$-product \eqref{WH-div-prod})  
in the case of a general symmetric and positive definite
tensor $K(x)$ is an open question. Moreover, a tensor $K(x)$ 
with arbitrary heterogeneities and/or high anisotropy represents a geniune challenge
for both theory and computational practice. 
Our paper is a step in this direction for the case  of a highly heterogeneous permeability
tensor $K(x)$ that satisfies the condition \eqref{bound_below_K}.

\subsection{Contributions of the study}\label{ssec:summary}

The main result and novelty of this paper is  the design, theoretical discussion and
experimental study of a preconditioner for a matrix corresponding to the 
weighted norm $\| \cdot\|_{\Lambda_\al}$  in the space $\bH(\ddiv)$ 
(defined by  \eqref{WH-div-prod}) which gives
an iterative method for mixed finite element systems 
converging independently of the contrast~$\kappa$.  Such a construction
is based on ideas from~\cite{Kraus_Lymb_Mar_2014}.

A crucial role is played by the well known {\it inf-sup} condition. In this paper we consider the case of 
a permeability tensor satisfying condition \eqref{bound_below_K}. 
The inf-sup condition for this case immediately follows from the well studied situation where $K(x)=I$.  
However,   in order to emphasise the dependence of the inf-sup constant 
on the global properties of the differential operator and the means by which it can be extended to 
a more general form of $K(x)$ an outline of the proof is presented.
Furthermore, we present a short discussion in which $K(x)=k(x)I$ for 
highly heterogeneous values of $k(x)$  and $ 0 < k(x) < \infty$. 
Theorem \ref{DM_stability} establishes an
inf-sup condition and boundedness of the corresponding bilinear form in a discrete setting. 
We emphasise that under \eqref{bound_below_K} the constant in the inf-sup condition and the
boundedness of the corresponding form does not depend on the contrast
of the media. 

In Section \ref{s:precon}, which is central to the paper, a new
preconditioning method for the finite element systems 
is described. 
Firstly, a block-diagonal preconditioner for the operator form of the mixed FEM is defined.
Furthermore, Subsection ~\ref{sec:matrix-notation}
presents the FE problem in matrix form. 
The key issue when designing a
contrast-independent Krylov solver is the construction of a
robust preconditioner for the weighted $H(\ddiv)$-norm. This is
addressed in detail in Subsection~\ref{sec:robust-H_div-precond} by
scrutinising an abstract auxiliary space two-grid method, see
Subsection~\ref{sec:as-2-grid-precond}.  A crucial role in our analysis
is played by Lemma \ref{lem:key}, where we establish key inequalities
regarding this two-grid preconditioner. In this analysis an important aspect
is the norm of a suitable projection characterising this preconditioner.  
Presently a general theoretical proof of the independence of this norm with respect 
to the contrast does not exist.
However, we have presented numerical evidence (in Section~\ref{sec:numerics},
Table~\ref{table:norm_p_d_tilda}) that this quantity is bounded
independently of the contrast and the mesh size. Such a result is highly
desirable and of great practical value. Moreover, all our presented numerical
experiments indirectly show such robustness. 
Subsection \ref{sec:asmg} introduces an
auxiliary space multigrid (ASMG) method. 
Two variants of the algorithm, differing only in their relaxation procedure, 
are described in Subsection~\ref{sec:algorithms}.  The work needed to compute the action of the preconditioner is 
proportional to the total number of non-zeroes in the coarse grid matrices (the so called operator complexity of the preconditioner), and this is 
discussed in some detail in Section 4.4.

Finally, Section~\ref{sec:numerics} gives numerical results for three different examples of porous media in two dimensions
in order to test the robustness of the preconditioner with respect to media contrast and its
optimality with respect to mesh-size. All numerical results confirm these claims.

\section{Problem formulation}\label{sec:problem}

\subsection{Notation and preliminaries}

For functions defined on $\Om$ we use the standard notations for
Sobolev spaces, namely, $H^s(\Om)$ for $s \ge 0$ being an integer is the
space of functions having their generalised derivatives up to order
$s$ square-integrable on $\Om$.  We denote by $(\cdot,\cdot)$ the
$L^2$ and $[L^2]^d$ inner products.
The standard norms on $H^s$ are denoted by $\|\cdot\|_{s}$.
For $s=0$ we often utilise $\|\cdot\|$ without a subscript.
When the
norm is weighted with a matrix valued function $\omega(x)$,
with $\omega(x)$ SPD for almost all $x\in \Omega$ we implement the
notation:
\begin{equation}\label{weighted-norms}
\|\bv\|_{0,\omega} :=
\|\omega^{1/2}\bv\|, \quad
|\phi|_{1,\omega}:= \|\nabla \phi\|_{0,\omega} =\|\omega^{1/2}\nabla \phi\|.
\end{equation} 
Occasionally, when considering only a subset of $\Omega$, e.g.
$T\subset \Omega$, then this is indicated in the notation for the norms
and seminorms, i.e., $\|\cdot\|_{s,T}$,
$\|\cdot\|_{s,\omega,T}$, $|\cdot|_{s,T}$ and $|\cdot|_{s,\omega,T}$.

To put our work in perspective, in the following, we consider 
$ K \in \mathbb{R}^{d\times d}$ to be a symmetric matrix,
the norm $\| K \|_{\ell^2}$ is, as usual, the spectral radius of
$ K$.
We define the number
\begin{equation}\label{contrast}
\kappa = \max_{x \in \Omega} (\|K(x)\|_{\ell^2}\|K^{-1}(x)\|_{\ell^2})
\quad \mbox{with} \quad
\|K(x)\|_{\ell^2} = \sup_{\xi \in \mathbb{R}^d}(K(x) \xi \cdot \xi)/(\xi \cdot \xi).
\end{equation}
to be the contrast of the media. 
Obviously, 
$\kappa = \max_{x \in \Om}K(x)/\min_{x \in \Om}K(x)$ 
for scalar permeability $K(x)$. In many applications
$\kappa$ could be many orders of magnitude, often up to $10$. 
Higher orders are of particular interest to us.

The Hilbert space $\bH(\ddiv)$ consists of square-integrable
vector-fields on $\Om$ with square-integrable divergence. 
The inner product in $\bH(\ddiv)$ is given by
\begin{equation}\label{H-div-prod}
\Lambda(\bu, \bv)= (\bu,\bv) + (\ddiv \bu, \ddiv \bv) 
\quad \mbox{and consequently} \quad 
\|\bv\|^2_{\bH(\ddiv)}:= \Lambda(\bv, \bv).
\end{equation}
Together with the Sobolev spaces $H^1_D(\Om)$ we use the following notation  for $\bH_N(\ddiv)$:
\begin{equation}\label{H-N}
  \bHN(\ddiv):= \bHN(\ddiv; \Om)= \{ \bv \in \bH(\div; \Om): ~~\bv(x) \cdot \bn=0 \quad \text{on} \quad \Gamma_N\}.
\end{equation}
Note that for $\phi\in H^1_D(\Om)$ the semi-norms
$|\phi|_1=\|\nabla \phi\|$ and
$|\phi|_{1,\omega}=\|\omega^{1/2}\nabla \phi\|$
are in fact norms on $H^1_D(\Omega)$ and we denote these by
$\|\phi\|_1$ and $\|\phi\|_{1,\omega}$.

Together with \eqref{H-div-prod} the following weighted inner product in the space $\bH(\ddiv)$
plays a fundamental role in our analysis
\begin{equation}\label{WH-div-prod}
\Lambda_\al(\bu, \bv)= (\al~\bu,\bv) + (\ddiv \bu, \ddiv \bv),
\quad \alpha(x) = K^{-1}(x),
\end{equation}
which defines the norm
\begin{equation}\label{norms}
\|\bv\|^2_{\Lambda_\alpha}  =
\Lambda_\al(\bv, \bv)=
\|\bv\|^2_{0,\alpha} + \|\ddiv \bv\|^2.
\end{equation}
Note, that a weighted bilinear form of the type $
\Lambda_{\alpha,\beta}(\bu, \bv)= \alpha (\bu, \bv) + \beta (\ddiv
\bu, \ddiv \bv) $ with $\alpha > 0$ and $\beta > 0$ constants was
used by Arnold, Falk, and Winther in \cite{Arnold2000MG} to design
multigrid methods for $\bH(\ddiv)$-systems.  A key point in their
study was the construction of a multigrid method that converges
uniformly with respect to the parameters $\alpha$ and $\beta$. The
important difference between our bilinear form $\Lambda_\alpha$
compared with $ \Lambda_{\alpha,\beta}$ is that in our scheme $\alpha$
is a highly heterogeneous function with high contrast.
This makes the proof of an {\it inf-sup} condition with the 
weighted $\bH(\ddiv)$-norm more delicate
and the construction of an efficient preconditioner a challenging task,
see Remark \ref{weighted_Poincare}. 

Note that for  all $\xi\in\mathbb{R}^d$, $x\in \Omega$ and  
$\|K\|_{\ell^2} = \sup_{\xi \in \mathbb{R}^d}(K \xi \cdot \xi)/(\xi \cdot \xi) $
we have with $K:=K(x)$
\begin{equation*}
(K\xi\cdot\xi) \ge 
(\xi\cdot\xi) \, \inf_{\theta\in \mathbb{R}^d}\frac{(K\theta\cdot \theta)}{(\theta\cdot\theta)} =
(\xi\cdot\xi) \, \inf_{\theta\in  \mathbb{R}^d}\frac{(\theta\cdot\theta)}{(K^{-1}\theta\cdot\theta)}
= \frac{(\xi\cdot\xi) }{\|K^{-1}\|_{\ell^2}}.
\end{equation*}

Throughout the paper  the following inequality is assumed
\begin{equation}\label{bound_below_K}
1  \le \min_{x \in \Omega} \|K(x)\|_{\ell^2} \quad \text{which implies}  \quad
(\xi\cdot \xi) \le  (K(x)\xi\cdot\xi), \quad \xi\in \mathbb{R}^d. 
\end{equation} 
As seen from the considerations above, such an assumption is 
fulfilled if we scale the coefficient 
\[
K(x)\leftarrow
K(x)\max_{x \in \Omega}  \|K^{-1}(x)\|_{\ell^2}.
\]
Clearly such rescaling does not change the value of $\kappa$.
However, it would change the right hand side 
\[
f(x)\leftarrow
f(x)\max_{x \in \Omega} \|K^{-1}(x)\|_{\ell^2}
\]
and in general the stability of the solution cannot be established uniformly with respect to the 
contrast. 
Nevertheless, for homogeneous equations $ f(x) \equiv 0$ represents a large
class of applied problems. Such scaling then can be justified when the permeability is homogeneous near the Dirichlet boundary. 
Another possible case is when $f(x)=0$ in areas where
the permeability is very high.
The numerical examples 
of Powell and Silvester presented in \cite[Tables 2.9, 2.10]{powell2003optimal} for
$K(x)=k(x) I$, with $k(x)$ a scalar function and $I$ the identity matrix in $ \mathbb{R}^2$,
clearly show  this. The numerical experiments in Section~\ref{sec:numerics} 
consider homogeneous  equations. Furthermore, these 
problems are relevant to various numerical reservoir simulations. 

The case $0 < k(x) < \infty$, 
which could be used to model flow models in perforated domains,
appears to be more complicated and less studied. 
For such problems more advanced 
techniques involving weighted $L^2$-norms and the weighted Poincar\'e inequality are needed,
see Remark \ref{rem:Poincare}.  Such an inequality can be established
under certain restrictions on arrangement of the jumps 
and the topology of the permeability distribution, 
e.g. \cite{Rob_DD_XIX, Rob_Poincare_2013,Rob_Luso_Panayot_2012multilevel}.
Even more difficult is the case of tensor permeability $0 < \|K(x)||_{\ell^2} < \infty$,
which also includes models of flows in anisotropic highly heterogeneous media. 
These cases represent open problems with a wide range of 
applications and are left for further consideration and future studies.

\subsection{Weak formulations of the elliptic problem}\label{s:weak}
To present the dual mixed weak form we require the following notation, namely, 
$
\bV \equiv \bHN(\ddiv;\Omega)
$
and
$
W \equiv  L^2(\Omega). 
$

We multiply the first equation by $\al(x)=K^{-1}(x)$ and
a test function $ \bv$,  integrate over $\Om$, and perform integration by parts to obtain
\begin{equation}\label{first}
(\al(x) \bu, \bv) - (p, \ddiv \bv) = 0
\end{equation}
Next, multiplying the second equation by a test function
$q$ and integrating over $\Om$ gives
\begin{equation}\label{div-eqn}
(\ddiv \bu, q)=(f,q).
\end{equation}
Then the weak form of  the problem \eqref{equation-1} is: find  $\bu \in \bV$ and $p \in W$ such that
\begin{equation}\label{eq:dual_mixed}
\A^{}(\bu,p; \bv,q)= - (f, q),
\quad \mbox{for all}\quad (\bv, q) \in \bV \times W,
\end{equation}
where the bilinear form
$\A^{}(\bu,p; \bv,q): (\bV, W)  \times ( \bV, W) \to \bbR$ is defined as
\begin{equation}\label{A-form-mixed}
\A^{}(\bu,p; \bv,q):=(\al \bu, \bv)
-(p, \ddiv \bv) - (\ddiv \bu, q). 
\end{equation}

\subsection{Stability of the weak formulations}
Consider the stability of the discrete problem \eqref{eq:dual_mixed}.
We use the Poincar\'e inequality
\begin{equation}\label{Poincare}
\text{there is $C_P>0$ such that} ~~\mbox{for all}\quad
q \in H^1_D(\Omega): \quad
\|q\|^2 \le C_P \| \nabla q\|^2. 
\end{equation}
The constant $C_P$ depends only on the geometry of the domain
$\Omega$ and the splitting of $\partial \Omega$ into $\Gamma_D$ and
$\Gamma_N$. Moreover, due to \eqref{bound_below_K} for the
coefficient $K(x)$ we also have the inequality
$$
\|q\|^2 \le C_P \| \nabla q\|^2 \le
C_P \|\nabla q\|^2_{0,K}.
$$

To show the stability of the weak formulation we need a continuity  and an 
{\it inf-sup} condition (see, e.g. \cite{1991BrezziF_FortinM-aa,Ern-Guermond})
for the bilinear form  $\A^{}(\bu,p; \bv,q)$
on the spaces $ \bV $ and $L^2(\Omega)$  equipped with a weighted norm
$ (\Lambda_\al(\bv,\bv))^{\frac{1}{2}}$ and standard $L^2$-norm $\|p\|$,
respectively.

\begin{lemma} \label{lem:stab_var}
Let $W=L^2(\Omega)$, $\bV=\bHN(\ddiv)$, and $\Vert \bv \Vert_{\Lambda_\al}:=
(\Lambda_\al(\bv,\bv))^{\frac{1}{2}}$. Assume also that the permeability  
coefficient $K(x)$ satisfies the inequality \eqref{bound_below_K}.
Then the following inequalities hold
\begin{enumerate}
\item 
For all $\bu, \bv\in\bV$ and  for all  $p, q\in W$
\begin{equation}\label{continuity}
\A^{}(\bu,p; \bv,q)
\le ( \|\bu\|^2_{\Lambda_\al} +\|p\|^2)^\frac12( \|\bv\|^2_{\Lambda_\al} +\|q\|^2)^\frac12;
\end{equation}
\item There is a constant $\al_0>0$ independent of $\alpha$ such that
\begin{equation}\label{A-infsup}
\sup_{\bv \in \bV, \, q \in W}
\frac{\A^{}(\bu,p; \bv,q) }{(\|\bv\|^2_{\Lambda_\al} +\|q\|^2)^\frac12}
\ge \alpha_0 ( \|\bu\|^2_{\Lambda_\al} +\|p\|^2 )^\frac12
\end{equation}

\end{enumerate}

\end{lemma}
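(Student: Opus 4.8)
The plan is to verify the two standard Brezzi conditions for the saddle-point form $\A$ directly, using the structure $\A(\bu,p;\bv,q) = \Lambda_\al(\bu,\bv) - (\ddiv\bu,\ddiv\bv) - (p,\ddiv\bv) - (\ddiv\bu,q)$ — that is, I would first rewrite the $(\al\bu,\bv)$ term by adding and subtracting $(\ddiv\bu,\ddiv\bv)$ so the ``$a$-part'' of the form is exactly $\Lambda_\al(\bu,\bv)$ and the ``$b$-part'' involves $\ddiv$. Actually it is cleaner to keep the classical decomposition with $a(\bu,\bv)=(\al\bu,\bv)$, $b(\bv,q)=-(\ddiv\bv,q)$, prove inf–sup and coercivity-on-the-kernel in the \emph{unweighted} $\bH(\ddiv)$ setting, and then transfer to the $\Lambda_\al$-norm using \eqref{bound_below_K}, which gives $\|\bv\|_{0}\le\|\bv\|_{0,\al}\le\|\bv\|_{\Lambda_\al}$ and hence comparability of the relevant norms with constants independent of $\al$.

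First I would prove continuity \eqref{continuity}: expand $\A(\bu,p;\bv,q) = (\al\bu,\bv) - (p,\ddiv\bv) - (\ddiv\bu,q)$, bound each term by Cauchy–Schwarz — $(\al\bu,\bv)\le\|\bu\|_{0,\al}\|\bv\|_{0,\al}$, $(p,\ddiv\bv)\le\|p\|\,\|\ddiv\bv\|$, $(\ddiv\bu,q)\le\|\ddiv\bu\|\,\|q\|$ — and then collect the three products. Since $\|\bu\|_{0,\al}^2+\|\ddiv\bu\|^2=\|\bu\|_{\Lambda_\al}^2$, regrouping the three terms and applying the discrete Cauchy–Schwarz inequality in $\mathbb{R}^3$ (or $\mathbb{R}^4$, pairing the six scalar factors appropriately) yields the product $(\|\bu\|_{\Lambda_\al}^2+\|p\|^2)^{1/2}(\|\bv\|_{\Lambda_\al}^2+\|q\|^2)^{1/2}$. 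This step is routine.

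For the inf–sup condition \eqref{A-infsup} I would use the standard two-ingredient argument. Ingredient one: coercivity of $a(\cdot,\cdot)$ on the kernel $Z=\{\bv\in\bV:\ \ddiv\bv=0\}$; but on $Z$ we have $\|\bv\|_{\Lambda_\al}^2=\|\bv\|_{0,\al}^2=a(\bv,\bv)$, so this is trivial with constant $1$. Ingredient two: the inf–sup (LBB) condition $\inf_{q\in W}\sup_{\bv\in\bV}\frac{(\ddiv\bv,q)}{\|\bv\|_{\Lambda_\al}\|q\|}\ge\beta_0>0$. For the unweighted norm this is the classical surjectivity of $\ddiv:\bHN(\ddiv)\to L^2(\Omega)$: given $q$, solve $-\Delta\phi=q$ in $\Omega$ with $\phi=0$ on $\Gamma_D$ and $\partial_n\phi=0$ on $\Gamma_N$, set $\bv=-\nabla\phi$, so $\ddiv\bv=q$ and, via the Poincar\'e inequality \eqref{Poincare}, $\|\bv\|_{\bH(\ddiv)}^2=\|\nabla\phi\|^2+\|q\|^2\le(1+C_P)\|q\|^2$ — using $\|\nabla\phi\|\le C_P^{1/2}\|\nabla\phi\|\cdots$; more carefully $\|\nabla\phi\|^2=(q,\phi)\le\|q\|\,\|\phi\|\le C_P^{1/2}\|q\|\,\|\nabla\phi\|$ gives $\|\nabla\phi\|\le C_P^{1/2}\|q\|$. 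Then $\|\bv\|_{\Lambda_\al}\le\|\bv\|_{\Lambda_\al}$ bounded by $\|\bv\|_{\bH(\ddiv)}$ times... here is the one genuinely delicate point: passing from $\|\bv\|_{\bH(\ddiv)}$ to $\|\bv\|_{\Lambda_\al}$ costs a factor of $\max_x\|\al(x)\|^{1/2}$, which is \emph{not} contrast-independent in general. So instead I would estimate $\|\bv\|_{\Lambda_\al}$ directly: with $\bv=-\nabla\phi$ one has $\|\bv\|_{0,\al}^2=(\al\nabla\phi,\nabla\phi)$, and to control this by $\|q\|$ with a contrast-independent constant one must use that the test function is the $\al$-harmonic extension rather than the Laplace solution. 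Concretely, given $q$ I would instead solve the \emph{weighted} problem $-\ddiv(K\nabla\phi)=$ (a suitable right-hand side) — but since $\ddiv\bv=q$ with $\bv=-\nabla\phi$ forces the unweighted Laplacian, the correct route, following the remark in the excerpt about the inf–sup reducing to the case $K=I$, is: pick $\bv$ solving $\ddiv\bv=q$, $\bv=-\nabla\phi$ with $-\Delta\phi=q$; then $\|\bv\|_{0,\al}^2\le\|\bv\|_{0}^2\cdot\max$... no. The clean resolution is that under \eqref{bound_below_K} we have $\al(x)=K^{-1}(x)$ satisfies $\|\al(x)\|_{\ell^2}\le 1$, hence $(\al\bv,\bv)\le(\bv,\bv)$, so $\|\bv\|_{0,\al}\le\|\bv\|_0$ and therefore $\|\bv\|_{\Lambda_\al}^2=\|\bv\|_{0,\al}^2+\|\ddiv\bv\|^2\le\|\bv\|_0^2+\|q\|^2=\|\nabla\phi\|^2+\|q\|^2\le(1+C_P)\|q\|^2$. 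Thus $\sup_{\bv}\frac{(\ddiv\bv,q)}{\|\bv\|_{\Lambda_\al}}\ge\frac{\|q\|^2}{(1+C_P)^{1/2}\|q\|}=\frac{\|q\|}{(1+C_P)^{1/2}}$, giving $\beta_0=(1+C_P)^{-1/2}$, independent of $\al$. \textbf{This is the key observation: the one-sided bound $\al\le I$ from \eqref{bound_below_K} makes the weighted $\bH(\ddiv)$-norm of the inf–sup test function smaller than the unweighted one, so the classical LBB constant transfers for free.}

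Finally I would assemble \eqref{A-infsup} from the two ingredients by the standard saddle-point argument: given $(\bu,p)$, choose $\bv_0$ with $\ddiv\bv_0=$ (a multiple of $p$), $\|\bv_0\|_{\Lambda_\al}\lesssim\|p\|$ to extract a $\|p\|^2$ contribution from $-(p,\ddiv\bv_0)$ while controlling the cross term $(\al\bu,\bv_0)$ by Young's inequality; then test with $(\bv,q)=(\bu,-p)+\delta(\bv_0,0)$ for $\delta$ small, obtaining $\A(\bu,p;\bv,q)\ge c_1\|\bu\|_{\Lambda_\al}^2+c_2\|p\|^2$ after absorbing cross terms, and bounding $(\|\bv\|_{\Lambda_\al}^2+\|q\|^2)^{1/2}\lesssim(\|\bu\|_{\Lambda_\al}^2+\|p\|^2)^{1/2}$; dividing gives $\alpha_0$ depending only on $C_P$. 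The main obstacle is precisely the $\al$-dependence I flagged above — but it dissolves once one exploits \eqref{bound_below_K} in the form $\al(x)\le I$ rather than trying to use a two-sided spectral equivalence; everything else is bookkeeping with Cauchy–Schwarz and Young's inequality.
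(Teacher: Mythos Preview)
Your approach is essentially the same as the paper's: continuity via Cauchy--Schwarz on each term, and the LBB condition via the Laplace lifting $\bv=-\nabla\phi$ with $-\Delta\phi=q$, together with the key observation that \eqref{bound_below_K} gives $\al(x)\le I$ and hence $\|\bv\|_{\Lambda_\al}\le\|\bv\|_{\bH(\ddiv)}$, so the classical constant $\gamma=(1+C_P)^{-1/2}$ transfers to the weighted norm without picking up the contrast. The paper then simply invokes abstract Brezzi theory (citing Ern--Guermond, Xu--Zikatanov, and Krendl et al.) to pass from the $b$-inf--sup \eqref{BR_inf_sup} to the full inf--sup \eqref{A-infsup}, obtaining $\al_0\ge 1/(2+C_P)$.

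One small correction to your explicit assembly: the test pair $(\bv,q)=(\bu,-p)+\delta(\bv_0,0)$ yields only $\A(\bu,p;\bv,q)=\|\bu\|_{0,\al}^2+\delta\|p\|^2+\delta(\al\bu,\bv_0)$, with no $\|\ddiv\bu\|^2$ contribution, so you do not recover $c_1\|\bu\|_{\Lambda_\al}^2$ as claimed. You need to add a component $-\mu\,\ddiv\bu$ to $q$ (so that $-(\ddiv\bu,q)$ produces $\mu\|\ddiv\bu\|^2$); with $(\bv,q)=(\bu+\delta\bv_0,\,-p-\mu\,\ddiv\bu)$ and $\ddiv\bv_0=-p$ the argument closes exactly as you describe. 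This is indeed bookkeeping, and the paper avoids it altogether by quoting the abstract result.
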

\begin{proof}
The first inequality follows immediately by applying the Schwarz inequality to all three terms and keeping
in mind that $\al $ is a positive function. Proving the {\it inf-sup} condition \eqref{A-infsup} is equivalent
to proving the following inequality (see, \cite{Ern-Guermond}):
 \begin{equation}\label{BR_inf_sup}
  \inf_{q\in W} \sup_{\bv\in{\bV}}\frac{(\nabla\cdot \bv,q)}{\Vert \bv \Vert_{\Lambda_\al}
\Vert q \Vert}\ge
\gamma >0, \qquad \mbox{for all}\quad \bv\in\bV,\quad \mbox{for all}\quad q\in W.
 \end{equation}
 As is well known, if $\gamma $ is independent of the contrast
 $\kappa$, then so is $\alpha_0$. For more details on the relation
 between the constants $\gamma$ and $\alpha_0$ we refer to
 \cite{Xu2003Zikatanov}, see also Remark~\ref{rem:Poincare}. 
Furthermore, due to assumption  \eqref{bound_below_K} we have
$$
\Vert \bv \Vert_{\Lambda_\al} \le \|\bv\|_{\bH(\ddiv)} \quad \mbox{so that} \quad
  \inf_{q\in W} \sup_{\bv\in{\bV}}\frac{(\nabla\cdot \bv,q)}{\Vert \bv \Vert_{\Lambda_\al}
\Vert q \Vert} \ge   \inf_{q\in W} \sup_{\bv\in{\bV}}\frac{(\nabla\cdot \bv,q)}{\Vert \bv \Vert_{\bH(\ddiv)}
\Vert q \Vert}\ge \gamma >0.
$$
To find a computable bound for the constant $\gamma$ we can use the standard 
construction \cite{1991BrezziF_FortinM-aa} for the case $K(x)=1$ in $\Omega$.
For $q\in W$ we take $\bw = \nabla \varphi \in\bV$, where $\varphi\in H_D^1(\Omega)$ 
is the solution to the variational problem
$( \nabla \varphi,\nabla\chi ) = (q, \chi),$ for all $ \chi\in H_D^1(\Omega)$.
$\bw \in \bV$
with $\ddiv \bw= -q$ which holds in $L^2(\Omega)$ by construction; then using the above Poincar\'e inequality
we get
$\|\bw \| \le \sqrt{C_P} \|q\|$ so that,
$$ 
\sup_{\bv\in \bV} \frac{(q,\ddiv \bv)}{\|\bv\|_{\bH(\ddiv)}} \ge 
\frac{(q,\ddiv \bw)}{\|\bw\|_{\bH(\ddiv)}}=
\frac{\|q\|^2}{ (\|\bw\|^2+\|\ddiv \bw\|^2)^\frac12}
\ge \frac{\|q\|}{\sqrt{C_P+1}}.
$$

This shows \eqref{BR_inf_sup} with $\gamma =1/\sqrt{C_P+1}$, where $C_P$ is the constant in the
Poincar\'e inequality \eqref{Poincare}.
Then using the results of \cite{Xu2003Zikatanov,Krendl12stability} and inequalities \eqref{continuity} 
and \eqref{BR_inf_sup}
we deduce that 
the constant $\al_0$ in \eqref{A-infsup} is bounded from below. A sharp lower bound for 
$\al_0$ can be
obtained using the best known results of~\cite[Theorem 1]{Krendl12stability} to get
$
\al_0 \ge 1/( 2+ C_P),  
$
which completes the proof.  
\end{proof}

\begin{remark}\label{rem:Poincare}
As mentioned above, the case of scalar permeability $K(x)$,
$0< K(x) < \infty$, needs a different computational approach.
First we establish a special Poincar\'e inequality (involving the weighted $L^2$-norm)
with  a constant $C_P >0$
\begin{equation}\label{weighted_Poincare}
\| q \|^2_{0,K} := \int_\Omega K(x) q^2 \, dx \le C_P \| \nabla q\|_{0,K}^2  \quad
\mbox{where} \quad   \| \nabla q\|_{0,K}^2 = \int_\Omega K(x) |\nabla q|^2 \, dx.
\end{equation}
This type of inequality plays a role in domain decomposition methods, multiscale FEM
and multigrid preconditionters and has been studied
in e.g.~\cite{dryja1996multilevel,galvis2010domain,Rob_DD_XIX, Rob_Poincare_2013,Rob_Luso_Panayot_2012multilevel}.
Particularly relevant to our work is the study conducted in \cite{Rob_DD_XIX, Rob_Poincare_2013,Rob_Luso_Panayot_2012multilevel}
where, under certain 
restrictions on the distribution of the permeability $K(x)$, the constant $C_P$ in
\eqref{weighted_Poincare} is shown to be 
independent of the contrast~$\kappa$. 
Then using  \eqref{weighted_Poincare} one can prove the following {\it inf-sup} condition
 \begin{equation}\label{inf_sup_stab_var_weight}
  \inf_{q\in W} \sup_{\bv\in{\bV}}\frac{(\nabla\cdot \bv,q)}{(\Vert \bv \Vert^2_{0,\al} 
  + \Vert \nabla \cdot \bv \Vert^2_{0,\al})^{1/2}
\Vert q \Vert_{0,K}}\ge  \frac{1}{\sqrt{C_P+1}}
\qquad \mbox{for all}\quad \bv\in\bV,\quad q\in W.
 \end{equation}
However, this approach needs additional research for preconditioning the
weighted $H(div)$-norm 
$ (\Vert \bv \Vert^2_{0,\al}  + \Vert \nabla \cdot \bv \Vert^2_{0,\al})^{1/2}  $
and is left for future consideration.
\end{remark}

\section{FEM approximations}\label{s:FEM}

\subsection{Finite element partitioning and spaces}\label{ss:FE_spaces}
We assume that the domain $\Omega$ is connected and is triangulated
with $d$ dimensional simplexes. The triangulation is denoted by
$\mathcal{T}_h$ with the simplexes forming $\mathcal{T}_h$ assumed to be shape regular 
(the ratio between the diameter of a simplex and the inscribed ball is bounded above).
Now we consider the finite element approximation of problem~\eqref{equation-1} 
using the finite dimensional spaces  $\bVh \subset \bV$ 
and $W_h \subset W$ of piece-wise polynomial functions. 

It is well known that for the vector variable $\bu$ we can use $\bH(\ddiv)$-conforming
or Raviart-Thomas space $\RT{k}$ or Brezzi-Douglas-Marini $\BDM{k+1}$ finite elements.
However, since the problem has low regularity it is natural to use lowest order
finite element spaces. For the vector variable $\bu$ we use the standard
Raviart-Thomas $\text{RT}_0$ for simplexes and cubes. In the case of simplexes we can also apply
Brezzi-Douglas-Marini  $\text{BDM}_1$  finite elements.
Since $W$ is essentially $L^2(\Omega)$ 
for its finite element counterpart we can use a piece-wise constant function over the partition 
$\mathcal{T}_h$. We show that the corresponding finite element method is uniformly stable with
respect to the contrast $\kappa$.

\subsection{Stability of the mixed FEM} 
%
Thus, 
we take
\begin{equation}\label{space Vh}
\bVh=\{ \bv \in \bV: \, \bv |_T \in \RT{0} \,\,\, \mbox{for} \,\, T \in {\mathcal T}_h\}
\end{equation}
and
\begin{equation}\label{space Wh}
W_h =\{ q \in L^2(\Omega): \, q|_{T} \in {\mathcal P}_0,
\, \text{i.e. $q$ is a piece-wise constant function on} \,\,  {\mathcal T}_h\}.
\end{equation}
The mixed finite element approximation
of  the problem~\eqref{equation-1} 
 is: find  $\bu_h \in \bVh$ and $p_h \in W_h$ such that
\begin{equation}\label{eq:dual_mixed_FEM}
\A^{}(\bu_h,p_h; \bv,q)= - (f,q),
\quad \mbox{for all}\quad (\bv, q) \in \bVh \times W_h,
\end{equation}
where the bilinear form $ \A^{}(\bu_h,p_h; \bv,q)$ is defined by
\eqref{A-form-mixed}.
Our goal is to establish a discrete variant of the {\it inf-sup} condition.
\begin{lemma}\label{lem:stab_fin}
Let $\bVh$ be the space defined by \eqref{space Vh} and $W_h$ be the space defined by
\eqref{space Wh}. Assume also that the permeability  
coefficient $K(x)$ satisfies inequality \eqref{bound_below_K}. 
Then independent of the contrast $\kappa$ and the step-size $h$
the following inequality holds true:
\begin{equation}\label{inf_sup_fin_set}
 \inf_{q_h\in W_h}\sup_{\bv_h\in\bVh}
\frac{(\div \bv_h,q_h)}{\Vert \bv_h\Vert_{\Lambda_\al} \Vert q_h\Vert}\ge
\gamma >0 . 
\end{equation}
\end{lemma}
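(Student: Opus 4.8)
The plan is to obtain \eqref{inf_sup_fin_set} from the continuous {\it inf-sup} condition of Lemma~\ref{lem:stab_var} by transferring it to the finite element pair with a bounded, divergence-commuting Fortin operator, after first discarding the weight. \emph{Step 1 (removing the weight).} Exactly as in the proof of Lemma~\ref{lem:stab_var}, assumption \eqref{bound_below_K} gives $(\al\bv_h,\bv_h)\le(\bv_h,\bv_h)$, hence $\|\bv_h\|_{\Lambda_\al}\le\|\bv_h\|_{\bH(\ddiv)}$ for all $\bv_h\in\bVh$; consequently
\begin{equation*}
\inf_{q_h\in W_h}\sup_{\bv_h\in\bVh}\frac{(\div\bv_h,q_h)}{\|\bv_h\|_{\Lambda_\al}\,\|q_h\|}\ \ge\ \inf_{q_h\in W_h}\sup_{\bv_h\in\bVh}\frac{(\div\bv_h,q_h)}{\|\bv_h\|_{\bH(\ddiv)}\,\|q_h\|},
\end{equation*}
so it suffices to bound the right-hand side from below by a constant depending only on $\Omega$ and on the partition of $\partial\Omega$ into $\Gamma_D$, $\Gamma_N$. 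After this reduction the coefficient $K$, and thus the contrast $\kappa$, plays no further role.

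\emph{Step 2 (Fortin transfer).} Let $P_h:L^2(\Omega)\to W_h$ be the $L^2$-orthogonal projection onto piece-wise constants and let $\Pi_h:\bV\to\bVh$ be a Fortin operator for the $\RT{0}$--$\mathcal{P}_0$ pair, i.e.\ one satisfying the commuting identity $\div\,\Pi_h\bv=P_h\,\div\bv$ together with the $h$-uniform bound $\|\Pi_h\bv\|_{\bH(\ddiv)}\le C_F\|\bv\|_{\bH(\ddiv)}$. Fix $q_h\in W_h$. For every $\bv\in\bV$ we have $\Pi_h\bv\in\bVh$ and, since $q_h\in W_h$,
\begin{equation*}
(\div\,\Pi_h\bv,q_h)=(P_h\,\div\bv,q_h)=(\div\bv,q_h),
\end{equation*}
so that
\begin{equation*}
\sup_{\bv_h\in\bVh}\frac{(\div\bv_h,q_h)}{\|\bv_h\|_{\bH(\ddiv)}}\ \ge\ \sup_{\bv\in\bV}\frac{(\div\,\Pi_h\bv,q_h)}{\|\Pi_h\bv\|_{\bH(\ddiv)}}\ \ge\ \frac{1}{C_F}\sup_{\bv\in\bV}\frac{(\div\bv,q_h)}{\|\bv\|_{\bH(\ddiv)}}\ \ge\ \frac{1}{C_F}\cdot\frac{\|q_h\|}{\sqrt{C_P+1}},
\end{equation*}
the last inequality being the continuous estimate established in the proof of Lemma~\ref{lem:stab_var}. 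Dividing by $\|q_h\|$ and combining with Step 1 yields \eqref{inf_sup_fin_set} with $\gamma=(C_F\sqrt{C_P+1})^{-1}$, independent of both $h$ and $\kappa$.

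\emph{The main obstacle} is the existence of a Fortin operator $\Pi_h$ that is bounded on all of $\bHN(\ddiv)$ with an $h$-independent constant $C_F$. The canonical $\RT{0}$ interpolant does not qualify: its definition involves face functionals and requires slightly more than $L^2$-regularity of $\bv$ (for instance $\bv\in[L^s(\Omega)]^d$ with $s>2$, or $\bv\in[H^{1/2+\varepsilon}(\Omega)]^d$), which is not available for a general $\bv\in\bHN(\ddiv)$ on a polygon with mixed boundary conditions. I would handle this by the standard device: compose a Cl\'ement/Scott--Zhang-type smoothing with the $\RT{0}$ interpolant and add an element-wise, divergence-preserving correction (an interior bubble on each simplex), obtaining an operator that satisfies the commuting identity $\div\,\Pi_h\bv=P_h\,\div\bv$ and is $\bH(\ddiv)$-stable uniformly in $h$, with $C_F$ depending only on the shape-regularity of $\mathcal{T}_h$ and on $d$. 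An alternative that avoids interpolation altogether is to use that $\div:\bVh\to W_h$ is surjective for the $\RT{0}$--$\mathcal{P}_0$ pair and to take $\bv_h$ as the minimal-$L^2$-norm discrete preimage of $q_h$, bounding its norm through the continuous field from Lemma~\ref{lem:stab_var} by a scaling argument on the reference simplex. Either route completes the proof and yields a $\gamma$ that is explicit in $C_P$, in the shape-regularity constant, and in the dimension.
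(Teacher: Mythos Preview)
Your proof is correct and follows essentially the same approach as the paper: Step~1 is identical to the paper's argument (reduce to the unweighted norm via \eqref{bound_below_K}), and Step~2 supplies a Fortin-operator proof of the unweighted discrete inf--sup, which the paper simply invokes as ``well known'' with a citation to \cite{1991BrezziF_FortinM-aa,Ern-Guermond}. Your discussion of the regularity obstacle for the canonical $\RT{0}$ interpolant and the standard remedies is accurate but more detailed than anything the paper provides.
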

\begin{proof}
First we note that {\it inf-sup} condition for the case $K(x)=1$ 
is well known, \cite{1991BrezziF_FortinM-aa,Ern-Guermond}.
Then using the same argument as in the proof 
of Lemma \ref{lem:stab_var} we show the desired result. 
Note that the constant $\gamma$ will depend on the constant $C_P$ of the Poinacar\'e 
inequality and  the properties of the 
finite element partitioning, but is not dependent on the contrast $\kappa$. 
\end{proof}
As a consequence of Lemma \ref{lem:stab_fin} and \eqref{continuity}  we have
\begin{theorem}\label{DM_stability}
Assume that the permeability  
coefficient $K(x)$ satisfies the inequality \eqref{bound_below_K}. 
Then the following bounds are valid for all $\bu \in \bVh$ and $ p \in W_h$:
\begin{equation}\label{DM-Fem_stability}
\alpha_0 ( \|\bu\|^2_{\Lambda_\al} +\|p\|^2)^\frac12 \le \sup_{\bv \in\bVh, q\in W_h}
\frac{\A^{}(\bu,p; \bv,q) }{(\|\bv\|^2_{\Lambda_\al} +\|q\|^2)^\frac12} \le 
( \|\bu\|^2_{\Lambda_\al} +\|p\|^2)^\frac12.
\end{equation}
The constant $\alpha_0 > 0$ may depend on the shape regularity of the
mesh. However, it is  independent of the contrast $\kappa$ and the mesh-size $h$.
In fact, $\al_0 \ge 1/(1+1/\gamma^2)$, where $\gamma$ is the constant in \eqref{inf_sup_fin_set}.
\end{theorem}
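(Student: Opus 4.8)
The plan is to treat the two inequalities in \eqref{DM-Fem_stability} separately: the right one is an immediate consequence of continuity, and the left one is obtained by checking the hypotheses of the abstract theory of mixed variational problems for the discrete pair $(\bVh,W_h)$, with Lemma~\ref{lem:stab_fin} supplying the only non-trivial ingredient.

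\emph{Upper bound.} Inequality \eqref{continuity} was established for all $\bu,\bv\in\bV$ and $p,q\in W$; since $\bVh\subset\bV$ and $W_h\subset W$ it holds verbatim for discrete arguments. Dividing by $(\|\bv\|^2_{\Lambda_\al}+\|q\|^2)^{1/2}$ and taking the supremum over $(\bv,q)\in\bVh\times W_h$ gives the right inequality in \eqref{DM-Fem_stability}.

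\emph{Lower bound.} I would write $\A(\bu,p;\bv,q)=m(\bu,\bv)+b(\bv,p)+b(\bu,q)$ with $m(\bu,\bv):=(\al\bu,\bv)$ and $b(\bv,q):=-(\ddiv\bv,q)$, and verify the Brezzi conditions for this pair of forms on $\bVh\times W_h$, measured in the norms $\|\cdot\|_{\Lambda_\al}$ on $\bVh$ and $\|\cdot\|$ on $W_h$. First, Lemma~\ref{lem:stab_fin} is precisely the discrete inf-sup condition for $b$, with constant $\gamma$. Second, and this is the step where the choice of spaces matters, since $\ddiv\bVh\subseteq W_h$ the discrete kernel $Z_h:=\{\bv_h\in\bVh: b(\bv_h,q_h)=0\ \text{for all }q_h\in W_h\}$ consists of genuinely divergence-free fields: choosing $q_h=\ddiv\bv_h$ forces $\ddiv\bv_h=0$. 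Hence on $Z_h$ one has $\|\bv_h\|^2_{\Lambda_\al}=\|\bv_h\|^2_{0,\al}+\|\ddiv\bv_h\|^2=(\al\bv_h,\bv_h)=m(\bv_h,\bv_h)$, so $m$ is coercive on $Z_h$ (and, by Cauchy--Schwarz, continuous there) with constant $1$. Third, $m$ and $b$ are globally continuous with constants $\le 1$: $m(\bu,\bv)\le\|\bu\|_{0,\al}\|\bv\|_{0,\al}\le\|\bu\|_{\Lambda_\al}\|\bv\|_{\Lambda_\al}$ and $b(\bv,q)\le\|\ddiv\bv\|\,\|q\|\le\|\bv\|_{\Lambda_\al}\|q\|$ by \eqref{norms}. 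With these four constants in hand, the abstract results of \cite{Xu2003Zikatanov} (in sharp form, \cite[Theorem~1]{Krendl12stability}) convert the Brezzi conditions into the single Babu\v{s}ka inf-sup condition \eqref{DM-Fem_stability} with $\alpha_0\ge 1/(1+1/\gamma^2)$ --- exactly the passage already used in the proof of Lemma~\ref{lem:stab_var}.

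\emph{Where the difficulty is --- and is not.} No genuinely new estimate is required beyond Lemma~\ref{lem:stab_fin}; the remaining ingredients are universal constants, so I expect the write-up to be short. The one point that deserves care is the contrast-independence of the kernel coercivity in the second step: it holds because the divergence term of $\|\cdot\|_{\Lambda_\al}$ vanishes identically on $Z_h$, so $m$ reproduces the full norm there and no weighted Poincar\'e--type inequality (which would reintroduce $\kappa$, cf.\ Remark~\ref{rem:Poincare}) enters. Since the $\gamma$ of Lemma~\ref{lem:stab_fin} depends only on $C_P$ and the shape regularity of $\T_h$, so does $\alpha_0$, which yields the asserted independence of $\kappa$ and $h$ and the explicit value $\alpha_0\ge 1/(1+1/\gamma^2)$. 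As an alternative to invoking the abstract theory one may exhibit an explicit test pair --- e.g.\ $\bv=\bu-\delta\bw$ with $\bw$ furnished by the discrete inf-sup condition and $q=-s\,\ddiv\bu$, then optimise $\delta,s$ --- which reproduces the lower bound (with a poorer constant) at least when $\gamma$ is not too small, but the abstract route is cleaner and delivers the stated sharp constant.
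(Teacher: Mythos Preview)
Your proposal is correct and follows essentially the same approach as the paper, which presents the theorem as an immediate consequence of Lemma~\ref{lem:stab_fin} and the continuity estimate~\eqref{continuity}, invoking the abstract results of~\cite{Xu2003Zikatanov,Krendl12stability} (already used in the proof of Lemma~\ref{lem:stab_var}) for the explicit constant. You spell out more detail than the paper does --- in particular the kernel-coercivity step using $\ddiv\bVh\subset W_h$ --- but the strategy is identical.
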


\section{Preconditioning}\label{s:precon}

\subsection{Block-diagonal preconditioner for the system of the finite element method}\label{sec:b-diag-operator}
Now we consider problem \eqref{eq:dual_mixed} and for definiteness
we restrict ourselves to lowest order Raviart-Thomas mixed finite
elements on a rectangular grid. The goal of this section is to develop
and justify a preconditioner for the algebraic problem resulting from
the Galerkin method \eqref{eq:dual_mixed} that is independent of the media contrast.

Then \eqref{eq:dual_mixed} can be written as an operator equation in the space $X_h=\bV_h\times W_h$
equipped with the norm $\|{\bm x}_h \|^2_{X_h}= \| \bu_h\|^2_{\Lambda_\al} + \|p_h\|^2$   
for ${\bm x}_h =(\bu_h, p_h)$. Then,
\begin{equation}\label{mixed-operator}
 \mathcal{A}_h {\bm x}_h = {\bm f}_h, \quad \mbox{for} \quad
{\bm f}_h =({\bm 0}, - f_h) \in X_h,
\end{equation}
where for all
$\bm{y}_h=(\bv_h,q_h)\in X_h$
$$
\langle \mathcal{A}_h {\bm x}_h, {\bm y}_h \rangle = \A^{}(\bu_h,p_h; \bv_h,q_h).
\quad
$$
Here $\langle \cdot, \cdot \rangle$ denotes the duality between $  X_h^{\star}$ and $X_h$.
Obviously, the operator $\mathcal{A}_h:X_h\rightarrow X_h^{\star}$ is self-adjoint
on $X_h=\bV_h\times W_h$ and indefinite.

Now the right inequality in \eqref{DM-Fem_stability} 
can be written as 
$$
\| \mathcal{A}_h {\bm x}_h\|_{X_h^{\star}} =  
\sup_{{\bm y}_h \in X_h} 
\frac{ \langle \mathcal{A}_h {\bm x}_h, {\bm y}_h \rangle } {\| {\bm y}_h\|_{X_h}}
\le c \|{\bm x}_h \|_{X_h},
$$
where $c=1$. 
This means that $  \Vert\mathcal{A}_h\Vert_{\mathcal{L}(X_h,X_h^{\star})} \le c$.
Similarly, the left inequality in \eqref{DM-Fem_stability} 
leads to $\Vert\mathcal{A}_h^{-1}\Vert_{\mathcal{L}(X_h^{\star},X_h)} \le c $,
where $c=1/{\alpha_0}$.

Now our goal is to construct a positive definite self-adjoint operator
$\mathcal{B}_h:X_h\rightarrow X_h^{\star}$ such that all eigenvalues of
$\mathcal{B}_h^{-1}\mathcal{A}_h$ are uniformly bounded 
independent of $h$ and, more importantly, independent of the
contrast $\kappa$. 
Already, since 
$  \Vert\mathcal{A}_h\Vert_{\mathcal{L}(X_h,X_h^{\star})} \le c$ and 
$\Vert\mathcal{A}_h^{-1}\Vert_{\mathcal{L}(X_h^{\star},X_h)} \le c$ 
with $c$ independent of the contrast, 
then it follows that
\begin{equation}\label{eq:precond_norms}
 \Vert\mathcal{B}_h\Vert_{\mathcal{L}(X_h,X_h^{\star})}\;\; \mbox{and}\;\;
\Vert\mathcal{B}_h^{-1}\Vert_{\mathcal{L}(X_h^{\star},X_h)} \;\; \mbox{being uniformly bounded in} \;\; h \;\; \mbox{and}
\;\; \kappa
\end{equation}
is sufficient for $\mathcal{B}_h$ to be a uniform and robust
preconditioner for the minimum residual (MinRes) iteration.

Define the block-diagonal matrix 
\begin{equation}\label{eq:AFW_preconditioner}
 \mathcal{B}_h:=\left[
\begin{array}{cc}
 A_h & 0\\[2ex]
 0 & I_h
\end{array}
\right], 
\end{equation}
where $A_h:\,\bV_h\rightarrow \bV_h^{*}$ is given by
$
(A_h{\bu}_h,{\bv}_h):=\Lambda_\al(\bu_h,\bv_h)=
(\al \, {\bu}_h,{\bv}_h)+(\nabla\cdot {\bu}_h,\nabla\cdot {\bv}_h)
$ 
and $I_h$ is the identity operator in $W_h$.
Then estimates of the eigenvalues of $\mathcal{B}_h^{-1}\mathcal{A}_h$ are obtained in a standard manner:
consider the corresponding algebraic problem of finding the eigenpairs $(\lambda, {\bm x}_h)$, 
$\mathcal{A}_h {\bm x}_h = \lambda \mathcal{B}_h {\bm x}_h$, and use the above 
properties of  $\mathcal{A}_h$ and $ \mathcal{B}_h$, 
for more details, see \cite{powell2005parameter,powell2003optimal,vassilevski1996preconditioning}.

Then condition~\eqref{eq:precond_norms} reduces to $\Vert A_h
\Vert_{\mathcal{L}(\bV_h,\bV_h^{\star})}$ and $\Vert
A_h^{-1}\Vert_{\mathcal{L}(\bV_h^{\star},\bV_h)}$ being uniformly
bounded in $h$ and $\kappa$, which is
sufficient for optimality of the preconditioner, ~\cite{Arnold1997preconditioning}.  
Thus, the main task  in this section is the development and study of
a robust and uniformly convergent, with respect to $h$ and $\kappa$,
iterative method for solving systems with
$A_h \bu_h=\bb_h$.

\begin{remark}
We note that any successful development of a robust preconditioner $A_h$ could be also
used in the least-squares approximation of this problem written in a mixed form. 
In the least-squares approximation, e.g. \cite{pehlivanov1994least}, 
the upper right block is essentially an operator 
generated by the weighted $\bH(\ddiv)$-inner product 
$(\al \bu_h,\bv_h) +(\nabla \cdot \bu_h, \nabla \cdot \bv_h)$.
\end{remark}

\subsection{Reformulation of the FE problem using matrix notation}\label{sec:matrix-notation}
The derivation and the justification of the preconditioner are in the
framework of algebraic multilevel/multigrid methods. As a first step
we rewrite the operator equation \eqref{mixed-operator} in a matrix
form. Instead of functions ${\bm x}_h =(\bu_h, p_h)
\in \bV_h \times W_h$ we use vectors consisting of the
degrees of freedom determining ${\bm x}_h$ 
through the nodal basis functions, namely,
$$
{\bm x}=\left[
\begin{array}{c}
 {\bf u} \\
 {\bf p}
\end{array} \right ],
\quad \mbox{where} \quad
\quad {\bf u} \in \mathbb{R}^{|\mathcal{E}_h|},
\quad {\bf p} \in \mathbb{R}^{|\mathcal{T}_h|},
\quad \mbox{are vector columns}
$$
and $ {|\mathcal{E}_h|}$ is the number of edges in $ \mathcal{E}_h$, excluding those
on $\Gamma_N$, and $ {|\mathcal{T}_h|}$  is the number of
rectangles of the partition $ \mathcal{T}_h$.  Then $A$, $\Bdiv$,
$\widetilde A$ and $R$, denote matrices being either square or
rectangular.  As a result of this convention,
\eqref{mixed-operator} can be written in a matrix form~\eqref{eq:saddle_point_sys-intro}.
Our aim now is to derive and study a preconditioner for algebraic systems of the form \eqref{eq:saddle_point_sys-intro}, 
which due to the above considerations reduces to the efficient preconditioning of the system
\begin{equation}\label{algebraic-hdiv}
A \bu = \bb, \quad \bu, \bb \in  \mathbb{R}^N, \quad N:={|\mathcal{E}_h|}.
\end{equation}

\subsection{Robust preconditioning of the weighted $\bH(\ddiv)$-norm}\label{sec:robust-H_div-precond}

\cite{Kraus_12} has introduced the additive Schur complement approximation (ASCA) as a tool for constructing 
robust coarse spaces for high-frequency high-contrast problems.  Recently, this technique has
also been utilised as a building block for a new class of multigrid
methods in which a coarse-grid correction, as used in standard multigrid
algorithms, is replaced by an auxiliary-space
correction~\cite{Kraus_Lymb_Mar_2014}.  Viewed as a block
factorisation algorithm, the major computations in this so-called
auxiliary space multigrid (ASMG) method can be performed in parallel
since they consist of a two-level block factorisation of local
finite element stiffness matrices associated with a partitioning of
the domain into overlapping or non-overlapping subdomains. The
analysis of the two-grid ASMG preconditioner relies on the fictitious
space lemma, see~\cite{Kraus_Lymb_Mar_2014}. However, the underlying 
construction is purely algebraic and thus essentially differs from the
methodology in~\cite{hiptmair2007nodal}.

In this section we recall the basic construction of the
ASMG-method and specify modifications that allow its
successful application the linear systems arising from
$\bH(\ddiv)$-conforming discretisations of the subproblem
involving the weighted $\bH(\ddiv)$ bilinear form~\eqref{WH-div-prod}.

\subsubsection{Additive Schur complement approximation}\label{sec:asca}

The first step in the construction of the preconditioner involves a covering of the domain
$\Omega$ by $n$  overlapping subdomains $\Omega_{i}$, i.e.,
$ 
 \overline{\Omega}=\bigcup_{i=1}^{n} \overline{\Omega}_{i}.
$ 
This overlapping covering of $\Omega$ is to some extent
arbitrary with generous overlap.
For practical purposes, however, we consider the case in Figure \ref{fig:covering}.
For this any finite element in the partition $\mathcal{T}_h$
belongs to no more than four subdomains.
We associate subdomain matrices $A_{i}$,
$i=1,\dots,n$ with the subdomains $\Omega_{i}$, corresponding to the degrees of freedom
in domain $\Omega_i$, and assume that $A$ is assembled via
$$
A=\sum_{i=1}^{n} R_{{i}}^T A_{{i}} R_{{i}},
$$
where $ R_{i}$ is a rectangular matrix extending by zero the vector associated with the
degrees of freedom of $\bu_h$ in $\Omega_i$ to a vector representing the degrees of
freedom in the whole domain $\Omega$.
Assume further that the set $\mathcal{D}$ of degrees of freedom (DOF) of $\bu_h$
is partitioned into a set $\mathcal{D}_{\rm f}$, fine DOF, and a set
$\mathcal{D}_{\rm c}$, coarse DOF, so that
\begin{equation}\label{eq:partitioning_DOF}
\mathcal{D} = \mathcal{D}_{\rm f} \oplus \mathcal{D}_{\rm c},
\end{equation}
where $N_1:=\vert\mathcal{D}_{\rm f} \vert$ and $N_2:=\vert\mathcal{D}_{\rm c} \vert$
denote the cardinalities of $\mathcal{D}_{\rm f}$ and $\mathcal{D}_{\rm c}$, respectively,
with $N_1 + N_2 =N:=|{\mathcal E}_h|$. Recall that  $ |{\mathcal E}_h|$
is the number of edges in the partitioning ${\mathcal T}_h$ with the
edges on $\Gamma_N$ excluded.
Such a splitting is not obvious for the mixed finite element
method and is explained in detail later.
The splitting
\eqref{eq:partitioning_DOF} induces a representation of the matrices $A$ and
$A_{i}$ in two-by-two block form, i.e.,
\begin{equation}\label{eq:A_two_by_two-x}
A=\left[
        \begin{array}{cc}
        A_{11} & A_{12} \\
        A_{21} & A_{22}
        \end{array}
\right],\qquad
 A_{i}=\left[
        \begin{array}{cc}
        A_{i:11} & A_{i:12} \\
        A_{i:21} & A_{i:22}
        \end{array}
\right], \quad i=1,\dots,n.
\end{equation}
We now introduce the following auxiliary domain decomposition matrix
\begin{equation}\label{eq:auxA}
\widetilde{A}=
\left[ \begin{array}{ccccc}
A_{1:11} &&&& A_{1:12} R_{1:2} \\[0.5ex]
& A_{2:11} &&& A_{2:12} R_{2:2} \\[0.5ex]
&& \ddots && \vdots \\[0.5ex]
&&& A_{{n}:11} & A_{{n}:12} R_{n:2} \\[0.5ex]
R_{1:2}^T A_{1:21} & R_{2:2}^T A_{2:21} & \hdots & R^T_{n:2} A_{{n}:21}
& \displaystyle \sum_{i=1}^{n} R^T_{i:2} A_{i:22} R_{i:2}
                \end{array}
\right] .
\end{equation}
Setting
$\widetilde{A}_{11}={\rm diag}\{A_{1:11}, \ldots, A_{{n}:11}\}$,
$\widetilde{A}_{22}=\sum_{i=1}^{n} R^T_{i:2} A_{i:22} R_{i:2}$ we have
\begin{equation}\label{eq:A_two_by_two}
\widetilde{A}=\left[
        \begin{array}{cc}
        \widetilde{A}_{11} & \widetilde{A}_{12} \\
        \widetilde{A}_{21} & \widetilde{A}_{22}
        \end{array}
\right].
\end{equation}
Note that if $A$ is an SPD matrix, it follows that $\widetilde{A}$ is a symmetric and positive semi-definite matrix.
Moreover, $A \in \Reals{N{\times}N}$ and
$\widetilde{A} \in \Reals{\widetilde{N}{\times}\widetilde{N}}$
are related via
\begin{equation}
A=R \widetilde{A} R^T
\end{equation}
where
\begin{equation}\label{eq:R}
R=\left[ \begin{array}{cc}
           R_1 & 0 \\ 0 & I_2
           \end{array}
\right] , \qquad
R_1^T=\left[ \begin{array}{c}
           R_{1:1} \\ R_{2:1} \\ \vdots \\ R_{n:1}
           \end{array}
\right] .
\end{equation}
\begin{definition}[cf.~\cite{Kraus_12}]
The additive Schur complement approximation (ASCA) of the exact Schur complement
$S=A_{22} - A_{21} A_{11}^{-1} A_{12}$ is denoted by $Q$ and defined as the
Schur complement of
$\widetilde{A}$, i.e.,
$$Q:= \widetilde{A}_{22} -
\widetilde{A}_{21} \widetilde{A}_{11}^{-1} \widetilde{A}_{12}
=\sum_{i=1}^{n} R_{{i}:2}^T (A_{{i}:22} - A_{{i}:21}
A_{{i}:11}^{-1} A_{{i}:12}) R_{{i}:2}.
$$
\end{definition}
\begin{remark}
Note that $\widetilde{A}_{22}=A_{22}$. Thus denoting $\widetilde{N}_1$ and $\widetilde{N}_2$ to be the number 
of fine and coarse DOF on the auxiliary space we have 
$\widetilde{N}_2=N_2$ and $\widetilde{N}_1 \ge N_1$.
\end{remark}

\subsubsection{Auxiliary space two-grid preconditioner}\label{sec:as-2-grid-precond}

The method of fictitious space preconditioning had first been proposed in
\cite{1985MatsokinA_NepomnyashchikhS-aa,Nepomnyaschikh1991mesh,nepomnyaschikh1995fictitious}.
In the following we recall the basic idea.

Let $V=\Reals{N}$ and $\widetilde{V}=\Reals{\widetilde{N}}$ and
define a surjective mapping $\Pi_{\widetilde{D}}: \widetilde{V} \rightarrow V$ by
\begin{equation}\label{eq:Pi}
\Pi_{\widetilde{D}}=(R \widetilde{D} R^T)^{-1} R \widetilde{D},
\end{equation}
where $\widetilde{D}$ is a block-diagonal matrix, e.g.,
\begin{equation}\label{eq:tilde_D}
\widetilde{D}=\left[
             \begin{array}{cc}
              \widetilde{D}_{11} & 0 \\
              0 & I
             \end{array}
           \right],
\end{equation}
e.g., $\widetilde{D}_{11}=\widetilde{A}_{11}$ or
$\widetilde{D}_{11}={\rm diag}(\widetilde{A}_{11})$.

Consider now the fictitious-space two-grid preconditioner $C$ for $A$, which is implicitly defined
in terms of its inverse
\begin{equation}\label{eq:two-grid_0}
C^{-1}=\Pi_{\widetilde{D}} \widetilde{A}^{-1} \Pi^T_{\widetilde{D}}.
\end{equation}

The following spectral equivalence relation follows from application of the fictitious space
lemma, see~\cite{Nepomnyaschikh1991mesh,nepomnyaschikh1995fictitious},
also~\cite{Kraus_Lymb_Mar_2014}.

\begin{lemma} \label{lem:key}
For the preconditioner $C$ defined by~(\ref{eq:two-grid_0})  the following relations hold true
\begin{equation} \label{eq:relBA}
\vek{v}^T C \vek{v} \le \vek{v}^T A \vek{v} \le c_{\Pi} \vek{v}^T C \vek{v} \quad \mbox{for all} \quad \vek{v}\in V,
\end{equation}
\begin{equation}\label{eq:pi}
\kappa(C^{-1}A)\le c_{\Pi}=\Vert \pi_{\widetilde{D}} \Vert^2_{\widetilde{A}},
\quad\mbox{where}\quad
\pi_{\widetilde{D}} := R^T \Pi_{\widetilde{D}} .
\end{equation}
\end{lemma}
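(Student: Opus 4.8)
The plan is to obtain both \eqref{eq:relBA} and \eqref{eq:pi} as a direct application of the fictitious space lemma \cite{Nepomnyaschikh1991mesh,nepomnyaschikh1995fictitious} (see also \cite{Kraus_Lymb_Mar_2014}), taking the ``real'' space to be $V=\Reals{N}$ with inner product $\vek v\mapsto\vek v^TA\vek v$ and norm $\Vert\cdot\Vert_A$, the fictitious space to be $\widetilde V=\Reals{\widetilde N}$ with the $\widetilde A$-inner product and norm $\Vert\cdot\Vert_{\widetilde A}$, and the transfer operator to be $\Pi_{\widetilde D}:\widetilde V\to V$ from \eqref{eq:Pi}. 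First I would note that $\widetilde A$ is in fact SPD, so that $\widetilde A^{-1}$ in \eqref{eq:two-grid_0} and $C$ itself are unambiguously defined: the $(1,1)$-block $\widetilde A_{11}=\mathrm{diag}\{A_{1:11},\dots,A_{n:11}\}$ is SPD because each local matrix $A_i$ is SPD (the weighted $\bH(\ddiv)$ form is positive definite on the local $\RT{0}$ space), and the Schur complement of $\widetilde A$ equals the ASCA $Q=\sum_{i=1}^{n}R_{i:2}^T S_i R_{i:2}$ with $S_i:=A_{i:22}-A_{i:21}A_{i:11}^{-1}A_{i:12}$ SPD, which is SPD since every coarse degree of freedom lies in at least one $\Omega_i$; hence $\widetilde A$ is congruent to $\mathrm{diag}\{\widetilde A_{11},Q\}$ and therefore SPD. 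The fictitious space lemma then reduces the claim to two estimates: a continuity bound $\Vert\Pi_{\widetilde D}\widetilde{\vek v}\Vert_A\le\beta_1\Vert\widetilde{\vek v}\Vert_{\widetilde A}$ for all $\widetilde{\vek v}\in\widetilde V$, and a stable decomposition property, namely that every $\vek v\in V$ admits $\widetilde{\vek v}\in\widetilde V$ with $\Pi_{\widetilde D}\widetilde{\vek v}=\vek v$ and $\Vert\widetilde{\vek v}\Vert_{\widetilde A}\le\beta_0\Vert\vek v\Vert_A$; granting these, the lemma yields $\beta_0^{-2}\vek v^TC\vek v\le\vek v^TA\vek v\le\beta_1^2\vek v^TC\vek v$ and $\kappa(C^{-1}A)\le(\beta_0\beta_1)^2$.

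Both estimates rest on a single algebraic fact: $R^T$ is a right inverse of $\Pi_{\widetilde D}$. Indeed, $\widetilde D$ in \eqref{eq:tilde_D} is SPD (its $(1,1)$-block is $\widetilde A_{11}$ or its diagonal), and $R$ has full row rank $N$, since $R_1R_1^T=\sum_i R_{i:1}^TR_{i:1}$ is a positive diagonal matrix because the covering $\overline\Omega=\bigcup_i\overline\Omega_i$ reaches every fine degree of freedom; hence $R\widetilde D R^T$ is invertible and
\[
\Pi_{\widetilde D}R^T=(R\widetilde D R^T)^{-1}R\widetilde D R^T=I,
\]
which in particular shows that $\Pi_{\widetilde D}$ is surjective (so $C^{-1}$ is SPD). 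For the stable decomposition I would simply choose $\widetilde{\vek v}:=R^T\vek v$; then $\Pi_{\widetilde D}\widetilde{\vek v}=\vek v$, and since $A=R\widetilde A R^T$,
\[
\Vert\widetilde{\vek v}\Vert_{\widetilde A}^2=\vek v^TR\widetilde A R^T\vek v=\vek v^TA\vek v=\Vert\vek v\Vert_A^2,
\]
so $\beta_0=1$. For the continuity bound, using $A=R\widetilde A R^T$ again together with $\pi_{\widetilde D}=R^T\Pi_{\widetilde D}$ from \eqref{eq:pi},
\[
\Vert\Pi_{\widetilde D}\widetilde{\vek v}\Vert_A^2=(R^T\Pi_{\widetilde D}\widetilde{\vek v})^T\widetilde A(R^T\Pi_{\widetilde D}\widetilde{\vek v})=\Vert\pi_{\widetilde D}\widetilde{\vek v}\Vert_{\widetilde A}^2\le\Vert\pi_{\widetilde D}\Vert_{\widetilde A}^2\Vert\widetilde{\vek v}\Vert_{\widetilde A}^2,
\]
so $\beta_1=\Vert\pi_{\widetilde D}\Vert_{\widetilde A}$. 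Substituting $\beta_0=1$ and $\beta_1=\Vert\pi_{\widetilde D}\Vert_{\widetilde A}$ into the conclusion of the fictitious space lemma gives exactly \eqref{eq:relBA} and $\kappa(C^{-1}A)\le\Vert\pi_{\widetilde D}\Vert_{\widetilde A}^2=c_\Pi$, i.e.\ \eqref{eq:pi}. It is worth recording in passing that $\pi_{\widetilde D}=R^T(R\widetilde D R^T)^{-1}R\widetilde D$ is idempotent and $\widetilde D$-self-adjoint, hence the $\widetilde D$-orthogonal projection onto $\mathrm{range}(R^T)$; in particular $c_\Pi\ge1$ always.

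I do not expect any genuine difficulty in this argument: once the fictitious space lemma is invoked, everything follows from the identities $A=R\widetilde A R^T$ and $\Pi_{\widetilde D}R^T=I$ together with the elementary observations that $R$ has full row rank and that $\widetilde D$ and $\widetilde A$ are SPD. The real obstacle lies \emph{outside} the lemma, in what it leaves to be done: choosing the fine/coarse splitting \eqref{eq:partitioning_DOF}, the overlapping covering, and the weight $\widetilde D$ so that $\Vert\pi_{\widetilde D}\Vert_{\widetilde A}$ --- equivalently the bound $c_\Pi$ --- stays bounded independently of the contrast $\kappa$ and the mesh size $h$. As emphasised in the introduction, a general proof of this uniform bound is not available, and here it is supported only by the numerical evidence of Table~\ref{table:norm_p_d_tilda} and by the observed robustness of the resulting MinRes iteration.
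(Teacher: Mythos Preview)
Your proposal is correct and follows precisely the route the paper indicates: the paper does not give a detailed proof but simply states that the result ``follows from application of the fictitious space lemma'' and cites \cite{Nepomnyaschikh1991mesh,nepomnyaschikh1995fictitious,Kraus_Lymb_Mar_2014}. You have correctly identified and verified the two ingredients of that lemma --- the stable lifting $\widetilde{\vek v}=R^T\vek v$ giving $\beta_0=1$ via $A=R\widetilde A R^T$, and the continuity constant $\beta_1=\Vert\pi_{\widetilde D}\Vert_{\widetilde A}$ --- and your observation that $\Pi_{\widetilde D}R^T=I$ (hence $\pi_{\widetilde D}$ is a $\widetilde D$-orthogonal projection onto $\operatorname{range}(R^T)$) is exactly what underlies the argument in \cite{Kraus_Lymb_Mar_2014}. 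One small remark: the paper itself only asserts that $\widetilde A$ is symmetric positive \emph{semi}-definite, whereas you argue it is SPD; your reasoning (local $A_i$ are SPD because of the zero-order term in $\Lambda_\alpha$, hence $\widetilde A_{11}$ and the ASCA $Q$ are SPD) is sound and indeed needed for $\widetilde A^{-1}$ in \eqref{eq:two-grid_0} to make literal sense.
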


\begin{remark}
  A uniform bound on $c_{\Pi}$, independent of the
  contrast or the mesh size, immediately shows that the condition
  number of the preconditioned system is uniformly bounded. 
  However, a theoretical proof of such a robustness result presently
  exists only for particular $H^1$-conforming discretisations of
  second-order scalar elliptic equations with highly heterogeneous
  piecewise constant coefficient,
  see~\cite[e.g., Theorem~4.11]{Kraus_12} and \cite[Theorem 2]{Kraus_Lymb_Mar_2014}. 
  The purely algebraic construction of the preconditioner makes a more 
  general result desirable but also more difficult to prove. However,
  we provide numerical evidence that $c_\Pi$ is bounded
  independently of the contrast (see Table~\ref{table:norm_p_d_tilda}).
\end{remark}

Following the ideas in~\cite{Xu1996auxiliary}, we also consider a more general variant of
the preconditioner~(\ref{eq:two-grid_0}) that incorporates a pre- and post-smoothing process.
Let $M$ denote an $A$-norm convergent smoother, i.e., $\Vert I-M^{-1}A\Vert_A < 1$, and
$\overline{M}=M(M+M^T-A)^{-1}M^T$ the corresponding symmetrised smoother. Examples
of such smoothers include the Gauss-Seidel and damped Jacobi methods and
are well known.

Then an auxiliary space two-grid preconditioner $B$ can be implicitly defined in terms of its inverse
\begin{equation}\label{eq:two-grid_3}
B^{-1} := \overline{M}^{-1} + (I - M^{-T} A) C^{-1} (I - A M^{-1})
\end{equation}
where $C$ is given by~\eqref{eq:two-grid_0}. For a condition number estimate of $B^{-1}A$
we refer to~\cite{Kraus_Lymb_Mar_2014}.

\begin{remark}
The error propagation matrices related to the basic stationary iterative methods
\begin{equation}\label{eq:stat_it}
{\bf{x}}_{k+1}={\bf{x}}_{k} + X {\bf{r}}_k
\end{equation}
with $X=\tau^{-1} C^{-1}$ and $X=\overline{M}^{-1} + \tau^{-1} (I - M^{-T} A) C^{-1} (I - A M^{-1})$,
where ${\bf{x}}_{k}$ and ${\bf{r}}_{k}$ denote the $k$-th iterate and the $k$-th residual,
respectively, are given by
\begin{eqnarray*}
E_C&=&I - \tau^{-1} C^{-1}A \quad \mbox{and} \\ 
E_B&=&I-B^{-1}A=(I-M^{-T}A)(I- \tau^{-1} C^{-1}A)(I-M^{-1}A).
\end{eqnarray*}
Choosing the relaxation parameter $\tau^{-1}$ small enough, i.e.,
$\tau \ge c_{\Pi}=\Vert\pi_{\widetilde{D}} \Vert^2_{\widetilde{A}}$ ensures the
stationary iterative methods~(\ref{eq:stat_it}) to be convergent.
\end{remark}

\subsubsection{Auxiliary space multigrid method}\label{sec:asmg}

Let $k=0,1,\dots,\ell-1$ be the index of mesh refinement where $k=0$ corresponds
to the finest mesh, i.e., $A^{(0)}:=A_h=A$ denotes the fine-grid matrix. Consider
the sequence of auxiliary space matrices $\widetilde{A}^{(k)}$, in the two-by-two
block factorised form
\begin{equation}\label{factorizationK}
({\widetilde{A}}^{(k)})^{-1} =
(\widetilde{L}^{(k)})^T \widetilde{D}^{(k)} \widetilde{L}^{(k)} , 
\end{equation}
where
$$
\widetilde{L}^{(k)} =
\left [
\begin{array}{cc}
I & \\
-\widetilde{A}^{(k)}_{21} (\widetilde{A}^{(k)}_{11})^{-1} & I
\end{array}
\right ] , \quad
\widetilde{D}^{(k)} =
\left [
\begin{array}{cc}
\widetilde{A}^{(k)}_{11} & \\
& Q^{(k)}
\end{array}
\right ]^{-1}
$$
and
the additive Schur complement approximation $Q^{(k)}$ defines the next coarser level matrix, i.e.
\begin{equation}\label{factorizationK2}
 A^{(k+1)}:=Q^{(k)}.
\end{equation}

{
Now define the (nonlinear) AMLI-cycle ASMG preconditioner $C^{(k)}$ at level $k$ by
\begin{equation}\label{multigrid_preconditioner_0}
{C^{(k)}}^{-1} :=
\Pi^{(k)}
 (\widetilde{L}^{(k)})^T 
 \left [
\begin{array}{cc}
\widetilde{A}^{(k)}_{11} & \\
& C_{\nu}^{(k+1)}
\end{array}
\right ]^{-1}
\widetilde{L}^{(k)} {\Pi^{(k)}}^T
\end{equation}
where $\left[C_{\nu}^{(k+1)}\right]^{-1}$ is an approximation of the inverse of the coarse-level
matrix~(\ref{factorizationK2}). At the coarsest level we set
\begin{equation}
\left[C_{\nu}^{(\ell)}\right]^{-1} := {A^{(\ell)}}^{-1}
\end{equation}
and for $ k< \ell-1$ we employ a matrix polynomial of the form
\begin{equation}\label{eq:poly1}
\left[ C_{\nu}^{(k+1)}\right]^{-1} := (I-p^{(k)}({C^{(k+1)}}^{-1}A^{(k+1)})){A^{(k+1)}}^{-1}. 
\end{equation}
If the polynomial $p^{(k)}(t)$ satisfies the condition
$$
p^{(k)}(0)=1
$$
we have the equivalent expression
\begin{equation}\label{eq:poly2}
\left[ C_{\nu}^{(k+1)}\right]^{-1} = q^{(k)}({C^{(k+1)}}^{-1}A^{(k+1)})){C^{(k+1)}}^{-1}
\end{equation}
for (\ref{eq:poly1}) with $q^{(k)}(t)=(1-p^{(k)}(t))/t$ that requires the action of the inverse of $C^{(k+1)}$ only.

A classic choice for $p^{(k)}(t)$ is a scaled and shifted Chebyshev polynomial of degree $\nu_k=\nu$ .
Other polynomials are possible, e.g., choosing $q^{(k)}(t)$ to be the polynomial of best approximation
to $1/t$ in a uniform norm, see~\cite{Kraus2012polynomial}.
}

{
If we incorporate pre- and post-smoothing the AMLI-cycle ASMG preconditioner $B^{(k)}$ at level $k$ is given by
\begin{equation}\label{multigrid_preconditioner}
{B^{(k)}}^{-1} :=
{\overline{M}^{(k)}}^{-1} + (I - {M^{(k)}}^{-T} A^{(k)})
\Pi^{(k)}
 (\widetilde{L}^{(k)})^T  {\overline{D}^{(k)}}^{-1}
\widetilde{L}^{(k)} {\Pi^{(k)}}^T (I - A^{(k)} {M^{(k)}}^{-1})
 \end{equation}
where
$$
\quad \overline{D}^{(k)} :=
\left [
\begin{array}{cc}
\widetilde{A}^{(k)}_{11}& \\
& B_{\nu}^{(k+1)}
\end{array}
\right ] \quad
and \quad
\left[ B_{\nu}^{(k+1)} \right]^{-1} = q^{(k)}({B^{(k+1)}}^{-1}A^{(k+1)})){B^{(k+1)}}^{-1}.
$$

For the nonlinear AMLI-cycle ASMG method
$[B_{\nu}^{(k+1)}]^{-1} \equiv B_{\nu}^{(k+1)}[\cdot] \
(\mbox{or} \ [C_{\nu}^{(k+1)}]^{-1} \equiv C_{\nu}^{(k+1)}[\cdot])$
is a nonlinear mapping whose action on a vector $\vek{d}$ is realised by $\nu$ iterations
using a preconditioned Krylov subspace method.
In the following the generalised conjugate gradient method serves this
purpose and hence we denote $B_{\nu}^{(k+1)}[\cdot] \equiv B^{(k+1)}_{\GCG}[\cdot]$
(and $C_{\nu}^{(k+1)}[\cdot] \equiv C^{(k+1)}_{\GCG}[\cdot]$).
}

\begin{remark}
An important step in the construction of the nonlinear AMLI cycle method is that when performing
$B_{\nu}^{(k+1)}[\cdot]$ one applies~\eqref{multigrid_preconditioner} also for preconditioning
at level $(k+1)$ and hence, \eqref{multigrid_preconditioner} becomes a nonlinear operator, too--we
therefore write $${B^{(k)}}^{-1} \equiv B^{(k)}[\cdot], \quad \mbox{for all } k<\ell.$$
\end{remark}

\subsubsection{Nonlinear ASMG algorithm for the weighted $\bH(\ddiv)$ bilinear form}\label{sec:algorithms}

In the remainder of this section we present the nonlinear ASMG algorithm
for preconditioning the SPD matrices arising from discretisation of the weighted
bilinear form
\eqref{WH-div-prod}
and comment on some details of their implementation when specifically using
lowest-order Raviart-Thomas elements on rectangles.

In Figure \ref{fig:covering} we give an illustration of the {\it covering of ${\Omega}$} by
overlapping subdomains; Here there are $9$ staggered subdomains each of size $1/2$ of the
original domain $\Omega$.

\begin{figure}[ht!]
 \includegraphics[width=0.2\textwidth]{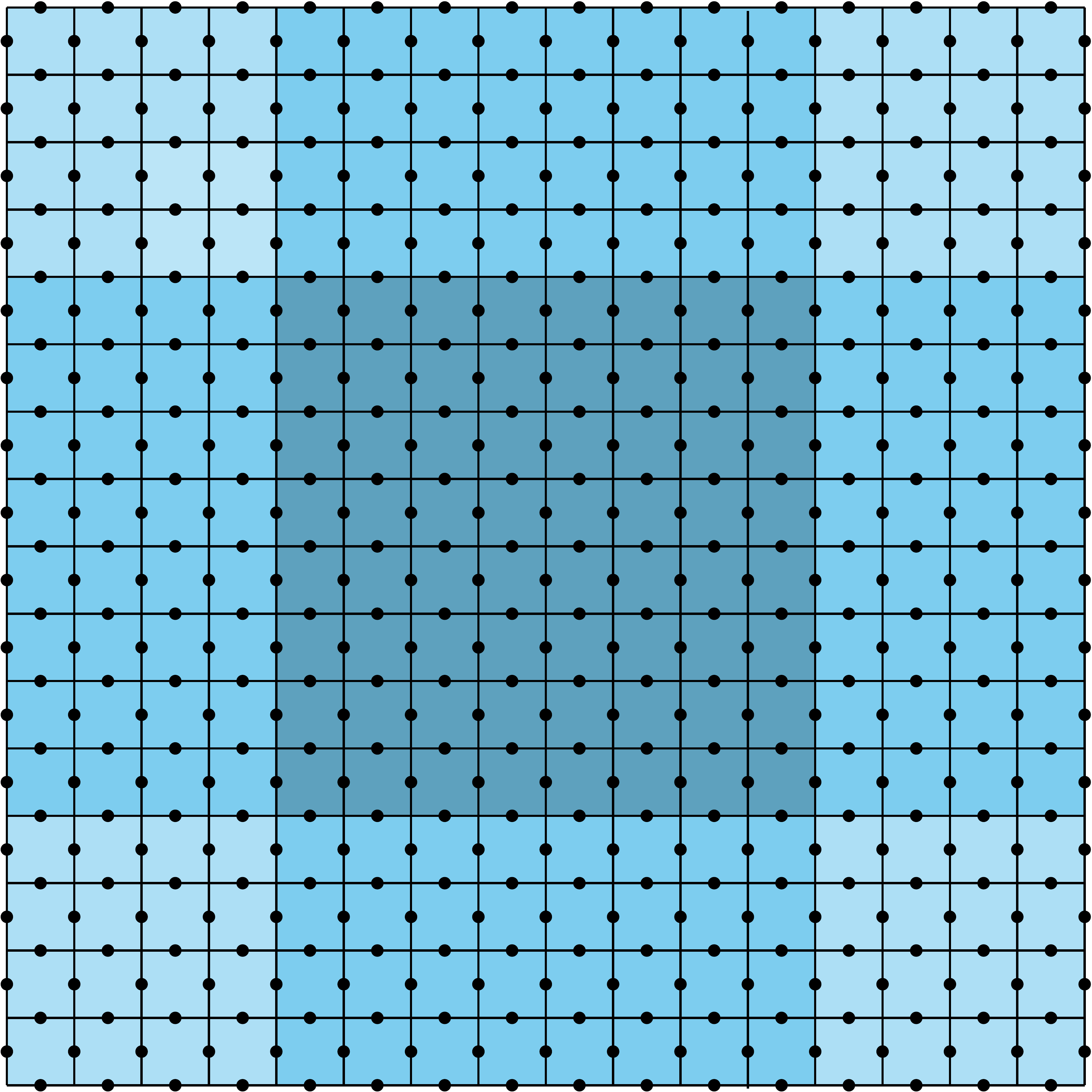}
\caption{Covering of the domain by nine overlapping subdomains}\label{fig:covering}
\end{figure}

Next, consider the {\it partitioning~\eqref{eq:partitioning_DOF} of
the set $\mathcal{D}$} of DOF.  We illustrate for the case of two grids, a coarse
grid ${\mathcal T}_H$,and fine grid ${\mathcal T}_h$, where $H=2h$.
Then the corresponding sets of edges are ${\mathcal E}_H$ and ${\mathcal E}_h$ 
with the following relations being obvious: $4|{\mathcal T}_H|= |{\mathcal T}_h| $
and $2|{\mathcal E}_H|+ 4|{\mathcal T}_H |= |{\mathcal E}_h| $.
Since for lowest-order Raviart-Thomas finite elements it is not immediately
clear how to partition $\mathcal{D}$, we perform a preprocessing step which consists
of a {\it compatible two-level basis transformation} \cite{Kraus_12}.
The global matrix $A$ is transformed
according to
\begin{equation}\label{eq:A_hat}
 \widehat{A}=J^T A J, \quad
\widehat{A}, J,  A  \in \bbR^{ |{\mathcal E}_h| \times  |{\mathcal E}_h| },
\end{equation}
where the transformation matrix $J$ is the product of a permutation matrix $P$ and another
transformation matrix $J_{\pm}$, i.e.
\begin{equation}\label{eq:transformation_J}
 J=P J_{\pm}, \quad P, J_{\pm} \in \bbR^{ |{\mathcal E}_h| \times  |{\mathcal E}_h| }.
\end{equation}

The permutation $P$ allows us to provide
a two-level numbering of the DOF that splits them into two groups,
the first one consisting of DOF associated with fine-grid edges not part of
any coarse-grid  edge
(interior DOF) and the second keeping all remaining DOF ordered such that any two that are
on one and the same coarse edge have consecutive numbers. The transformation matrix $J_{\pm}$
in~\eqref{eq:transformation_J} is of the form
$$
J_{\pm}=\left[\begin{array}{cc}
    I& \\
    & J_{22}
    \end{array}\right], \quad
\mbox{where} \quad I \in \bbR^{(4 |{\mathcal{T}_H}|) \times (4 |{\mathcal{T}_H}|)},
$$
and \quad
$
J_{22}=\frac12 \left[\begin{array}{cccccccc}
    1 & -1 & & & & & & \\
    &  &  1 & -1 & & & & \\
    & & & &  \ddots & \ddots & & \\
    & & & & & & -1 & 1 \\
    1 & 1 & & & & & & \\
    &  &  1 & 1 & & & & \\
    & & & &  \ddots & \ddots & & \\
    & & & & & & 1 & 1
    \end{array}\right],
\quad \mbox{where} \ \ J_{22} \in \bbR^{(2|{\mathcal{E}_H}|) \times (2 |{\mathcal{E}_H}|)}.
$

The analogous transformation is performed at a local level on each subdomain $\Omega_i$, i.e.
\begin{equation}\label{eq:A_i_hat}
\widehat{A}_i=J^T_i A_i J_i,
\end{equation}
where again $J_i=P_i J_{\pm,i}$ with $P_i$ the permutation as explained above but performed
on the degrees of freedom in the subdomain $\Omega_i$. $ J_{\pm,i}$ has its usual
meaning but restricted to the subdomain $\Omega_i$.

\begin{definition}
Global and local transformations are called {\it compatible} if
$$
\widehat{A}=J^T A J = \sum_{i=1}^{n_{\mathcal G}} \widehat{R}_i^T \widehat{A}_i \widehat{R}_i
$$
which is equivalent to the condition
$
R_i J=J_i \widehat{R}_i
$
for all $i$.
\end{definition}

The introduced transformation matrix $J$ defines the splitting of the DOF into coarse and fine, namely
the FDOF correspond to the set of interior DOF and half differences on the coarse edges while the
CDOF correspond to the half sums on the coarse edges.

{This transformation can be applied recursively to coarse-level matrices. The corresponding
two-level transformation matrices, referring to levels $k=0,1,\ldots,\ell-1$, are denoted by $J^{(k)}$.

Finally, the nonlinear ASMG preconditioner employs the following two-level matrices
$$
\widehat{A}^{(k)}={J^{(k)}}^T A^{(k)} J^{(k)}
$$
for all $k<\ell$. Its application to a vector $\widehat{\vek{d}}$ for the two-level basis at level $k$ can be formulated as follows.

\begin{algorithm}
{Action of preconditioner~\eqref{multigrid_preconditioner_0}
on a vector $\widehat{\vek{d}} = {J^{(k)}}^T \vek{d}$ at level $k$: 
$\widehat{C}^{(k)}[\widehat{\vek{d}}]$
}\label{algorithm1} \\[-1ex]
\hrule \vspace{1ex}
\begin{tabular}{lll}
& Auxiliary space correction: & $\left\{
\begin{array}{l}
\left(\begin{array}{c} \tilde{\vek{q}}_1 \\ \tilde{\vek{q}}_2 \end{array}\right)
:=\tilde{\vek{q}} = \Pi_{\widetilde{D}^{(k)}}^T \widehat{\vek{d}} \\
\tilde{\vek{p}}_1 = (\widetilde{A}^{(k)}_{11})^{-1} \tilde{\vek{q}}_1 \\
\tilde{\vek{p}}_2 ={J^{(k+1)}}
C_{\GCG}^{(k+1)}[{J^{(k+1)}}^T (\tilde{\vek{q}}_2 -  \widetilde{A}^{(k)}_{21} \tilde{\vek{p}}_1)] \\
\tilde{\vek{q}}_1 = \tilde{\vek{p}}_1 - (\widetilde{A}^{(k)}_{11})^{-1} \widetilde{A}^{(k)}_{12} \tilde{\vek{p}}_2 \\
\tilde{\vek{q}}_2 = \tilde{\vek{p}}_2 \\
\widehat{C}^{(k)}[\widehat{\vek{d}}] := \Pi_{\widetilde{D}^{(k)}} \tilde{\vek{q}}
\end{array} \right.$ \\
\end{tabular}
\\[1ex]
\hrule
\end{algorithm}

By incorporating pre- and post-smoothing the realisation of the preconditioner~\eqref{multigrid_preconditioner}
takes the following form.

\begin{algorithm}
{Action of preconditioner~\eqref{multigrid_preconditioner}  on a vector 
$\widehat{\vek{d}}$ at level $k$: $\widehat{B}^{(k)}[\widehat{\vek{d}}]$}\label{algorithm2} \\[-1ex]
\hrule \vspace{1ex}
\begin{tabular}{lll}
& Pre-smoothing: & $\widehat{\vek{u}} = (\widehat{M}^{(k)})^{-1} \widehat{\vek{d}}$ \\
& Auxiliary space correction: & 
$\widehat{\vek{v}} = \widehat{\vek{u}} + \widehat{C}^{(k)}[\widehat{\vek{d}} - \widehat{A}^{(k)} \widehat{\vek{u}}]$ \\
& Post-smoothing: & $\widehat{B}^{(k)}[\widehat{\vek{d}}] :=
\widehat{\vek{v}} + (\widehat{M}^{(k)})^{-T} (\widehat{\vek{d}} - \widehat{A}^{(k)}
\widehat{\vek{v}})$
\end{tabular}
\\[1ex]
\hrule
\end{algorithm}

}

\subsection{On the complexity of the ASMG preconditioner}\label{sec:complexity}

  We now address the important topic of estimating the computational
  work required for performing the action of the ASMG preconditioner.
  Clearly, from the algorithm descriptions given earlier, the number
  of flops required to evaluate such an action is proportional to the
  \emph{operator complexity} of the preconditioner, defined
  as the total number of non-zeroes in the matrices on all levels.  The
  most general algebraic multilevel preconditioners are usually
  constructed using the combinatorial graph structure of the
  underlying matrices (on the finest and coarser levels).  Estimating
  the operator complexities in such cases is not only difficult, but
  in most cases impossible due to the fact that such estimates should
  hold for the set of \emph{all} possible graphs. Reliable estimates
  are usually done for algorithms that construct coarse levels using
  at least some of the geometric information from the underlying
  problem. This is the case we consider here, and we also refer
  to~\cite{2012BrezinaM_VanekP_VassilevskiP-aa}, 
  \cite{2013WangL_HuX_CohenJ_XuJ-aa} for more insight into how  
  geometric information can be used to bound the operator complexity
  of a multilevel preconditioner.

  Given a matrix $A\in \mathbb{R}^{N\times N}$, we
  characterize the nonzero structure of the ASMG coarse level matrix
  $Q$.
To construct $Q$, recall that we first need to split the set of the
degrees of freedom as a union of subsets,
$\{1,\ldots,N\} = \cup_{i=1}^n\omega_i$.  We assume that 
$\omega_i=\{\mathcal{F}_i,\mathcal{C}_i\}$, where 
$\mathcal{F}_i$ is a set of fine grid degrees of freedom, 
and, $\mathcal{C}_i$, is a set of coarse grid degrees of freedom, with 
$\mathcal{F}_i\cap\mathcal{C}_i = \emptyset$. The total number of coarse grid degrees of freedom is $N_{\mathcal{C}}=\left|\cup_{j=1}^n \mathcal{C}_i \right|$. Since our
considerations are permutation invariant, without loss of generality, we assume that globally we have numbered first the
coarse grid degrees of freedom, and thus we have
$\mathcal{C}_i\subset\{1,\ldots,N_{\mathcal{C}}\}$.

We also set $N_i=|\omega_i|$, and $n_i=|\mathcal{C}_i|$.  We denote by
${\bf e}_k$ the $k$-th Euclidean basis vector in $\mathbb{R}^N$; when
we consider the canonical basis in $\mathbb{R}^m$, $m\neq N$, we use
the notation ${\bf e}_{k(m)}$ for the $k$-th basis vector. 
With each $\omega_i$  we  associate a matrix 
$R_i\in\mathbb{R}^{N_i\times N}$, and, for $\omega_i=\{j_1,\ldots,j_{N_i}\}$, we set
$R^T_i = [{\bf e}_{j_1},\ldots,{\bf e}_{j_{N_i}}]$.  
Next, we consider a fine grid matrix $A$ given by the identity
\[
A = \sum_{i=1}^n R^T_i A_i R_i= \sum_{i=1}^n
\begin{bmatrix}
R^T_{i,\mathcal{F}},R^T_{i,\mathcal{C}}
\end{bmatrix} 
\begin{bmatrix}
A_{i,\mathcal{F}} & A_{i,\mathcal{FC}} \\ 
A_{i,\mathcal{CF}} & A_{i,\mathcal{C}}
\end{bmatrix} 
\begin{bmatrix}
R_{i,\mathcal{F}}\\R_{i,\mathcal{C}}
\end{bmatrix} 
\]
where we used a block form of the matrices corresponding to 
the splitting of $\omega_i$ on $\mathcal{F}$-ine level and
$\mathcal{C}$-oarse level degrees of freedom. The Schur complements
$S_i$ used in the definition of the coarse grid matrix $Q$ 
are defined as $S_i =
A_{i,\mathcal{C}}-A_{i,\mathcal{CF}}A_{i,\mathcal{F}}^{-1}A_{i,\mathcal{FC}}$. 
Recall that the coarse grid matrix $Q$ then is
defined by
\(
Q = \sum_{i=1}^n \widetilde{R}^T_{i,\mathcal{C}}S_i \widetilde{R}_{i,\mathcal{C}}, \quad\mbox{and we have}\quad
Q\in\mathbb{R}^{N_{\mathcal{C}}\times N_{\mathcal{C}}}.
\) 
If we now use our assumption that the coarse grid degrees of freedom are numbered first,  
then $\widetilde{R}_{i,\mathcal{C}}\in \mathbb{R}^{n_i\times N_\mathcal{C}}$
is formed by the first $N_{\mathcal{C}}$ columns of
$R_{i,\mathcal{C}}\in \mathbb{R}^{n_i\times N_{\mathcal{C}}}$.  Next, we introduce
the vectors \( \bm{1}_i = (\underbrace{1,\ldots,1}_{n_i})^T\), and
\(\bm{\chi}_i = \sum_{j\in \mathcal{C}_i}
{\bf e}_{j(N_{\mathcal{C}})}\).
For a fixed $i$, the vector $\bm \chi_i\in \mathbb{R}^{N}$ is the
indicator vector of the set $\mathcal{C}_i$ as a subset of
$\{1,\ldots,N_{\mathcal{C}}\}$. Its components are equal to $1$ for
indicies in $\mathcal{C}_i$ and equal to zero otherwise. We note that
$\bm1_i\bm1_i^T$ is the $n_i\times n_i$ matrix of all ones, and we
encourage the reader to check the identity
$\bm{\chi}_i = \widetilde R_{i,\mathcal{C}}^T \bm{1}_i$.

To describe the nonzero structure of $Q$ we introduce
the set $\mathcal{B}_m$ of Boolean $(m\times m)$ matrices whose
entries are from the set $\{0,1\}$. We introduce a mapping
$\operatorname{nz}:\mathbb{R}^{m\times m}\mapsto \mathcal{B}_{m}$,
such that $[\operatorname{nz}(A)]_{ij} = 0$ if and only if $A_{ij}=0$
and $[\operatorname{nz}(A)]_{ij} = 1$ otherwise.  We say that
$X\preceq Y$ if $[\operatorname{nz}(Y)-\operatorname{nz}(X)]$ is a
matrix with non-negative entries.  This is a formal way to state that the nonzero
structure of $Y$ contains the nonzero structure of $X$, or, to say
that every zero in $Y$ is also a zero in $X$. 
Clearly, $S_i\preceq \bm1_i\bm1_i^T$, and, 
as a consequence, we have the following relation characterizing the  sparsity of $Q$:
\begin{equation}\label{nzQ}
Q\preceq \sum_{i=1}^n 
\sum_{i=1}^n \widetilde R_{i,\mathcal{C}}^T \bm{1}_i\bm{1}_i^T
\widetilde R_{i,\mathcal{C}} = 
\sum_{i=1}^n\bm{\chi}_i\bm{\chi}_i^T=:X.
\end{equation}
Note that from the right side of~\eqref{nzQ} we can conclude that
$Q_{km}$ may be nonzero only in the case when there exists $i$ such that
$k\in \mathcal{C}_i$ and $m\in \mathcal{C}_i$.  
Using~\eqref{nzQ} it is easy to compute a bound on
the number of non-zeroes $n_{z,j}$, for fixed column $j$ in $Q$. We have
\[
n_{z,j}\le \|X{\bf e}_{j(N_{\mathcal{C}})}\|_{\ell^1} = \sum_{i:j\in\mathcal{C}_i}|\mathcal{C}_i|.
\]
As is immediately seen, the number of non-zeroes per column in $Q$ is
bounded by a constant independent of $N$ if the following two
conditions are satisfied: (i) the number of coarse grid degrees of
freedom in each $\mathcal{C}_i$ is bounded; and (ii) every coarse grid
degree of freedom lies in a bounded number of subsets
$\mathcal{C}_i$. 

As a simple, but instructive example how the conditions (i) and (ii) can be
satisfied, let us consider a PDE discretized by FE method on a
quasiuniform grid with characteristic mesh size $h$ in 2D. The
considerations are independent of the PDE or the order of the FE
spaces (but the constants hidden in ``$\lesssim$'' below may depend on
the FE spaces and the order of polynomials). 
To define the sets $\omega_i$ on such a grid, we proceed as follows: 
(1) place a regular (square) auxiliary grid of size $\gamma h$, $\gamma\ge 2$ that
contains $\Omega$; (2) set $n$ to be the number of vertices on the
auxiliary grid, lying in $\Omega$; (3)~choose $\omega_i$ to be the
set of DOF
degrees of freedom 
lying in the support of the bilinear basis
function corresponding to the $i$-th vertex. Then we have that
$|\mathcal{C}_i| < N_i\lesssim 4\gamma$ and every coarse grid degree of
freedom lies in at most $4$ such subdomains. The constant hidden in
``$\lesssim$'' is a bound on the number of degrees of freedom lying in
a square of size $2h$. The fact that this bound depends only on 
the polynomial order and type of FE spaces follows from the assumption 
that the mesh is quasi-uniform. For efficient
and more sophisticated techniques using regular, but adaptively
refined, auxiliary grids in coarsening algorithms for unstructured 
problems we refer to~\cite{2012BrezinaM_VanekP_VassilevskiP-aa},
\cite{2013WangL_HuX_CohenJ_XuJ-aa}. Such techniques may directly
be applied to yield optimal operator complexities for the ASMG
preconditioner in the general case of shape regular grids, 
albeit the details are beyond the scope of our consideration here. 

\section{Numerical Experiments}\label{sec:numerics}

\subsection{Description of the parameters and the numerical test examples}

Subject to numerical testing are three representative cases characterised by a highly
varying coefficient $\al(x) = K^{-1}(x)$, namely:
\begin{enumerate}
\item[[a\hspace{-1ex}]] A binary distribution of the coefficient described by islands on which
 $\al=1.0$ against a background where $\al=10^{-q}$, see~Figure~\ref{fig:islands_binary};
 \item[[b\hspace{-1ex}]] Inclusions with $\al=1.0$ and a background with a coefficient
 $\al=\al_{T}=10^{-q_{rand}}$ that is constant on each element $\tau \in  {\mathcal T}_h$,
 where the random integer exponent $q_{rand}\in\{0,1,2,\dots,q\}$ is uniformly distributed,
see Figure~\ref{fig:islands_random};
\item[[c\hspace{-1ex}]] Three two-dimensional slices of the SPE10
benchmark problem, 
where the contrast $\kappa$ is $10^7$ for slices 44 and 74 and $10^6$ for slice~54,
see~Figure~\ref{fig:spe_10}.
\end{enumerate}
Test problems [a] and [b] are similar to those considered in other works, e.g.~\cite{efendiev2012robust,
Kraus_Lymb_Mar_2014,
Willems_SINUM_2014}. Example~[c] consists of 2-D slices 
of 3-dimensional data of SPE10 (Society of Petroleum Engineers)
benchmark, see~\cite{SPE10_project}.

\begin{figure}[hb]
\begin{center}
\subfigure[$32\times 32$ mesh]{
\includegraphics[width=0.21\textwidth]{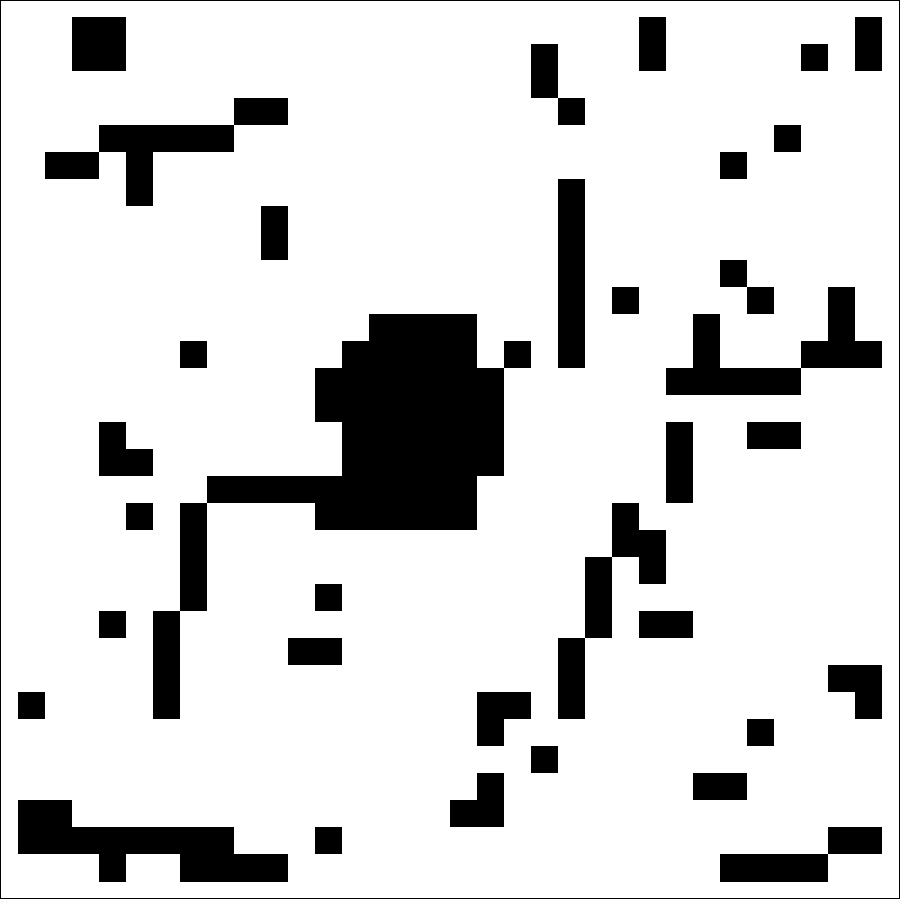}
\label{fig:islands_32}}
\hspace{10mm}
\subfigure[$128\times 128$ mesh]{
\includegraphics[width=0.21\textwidth]{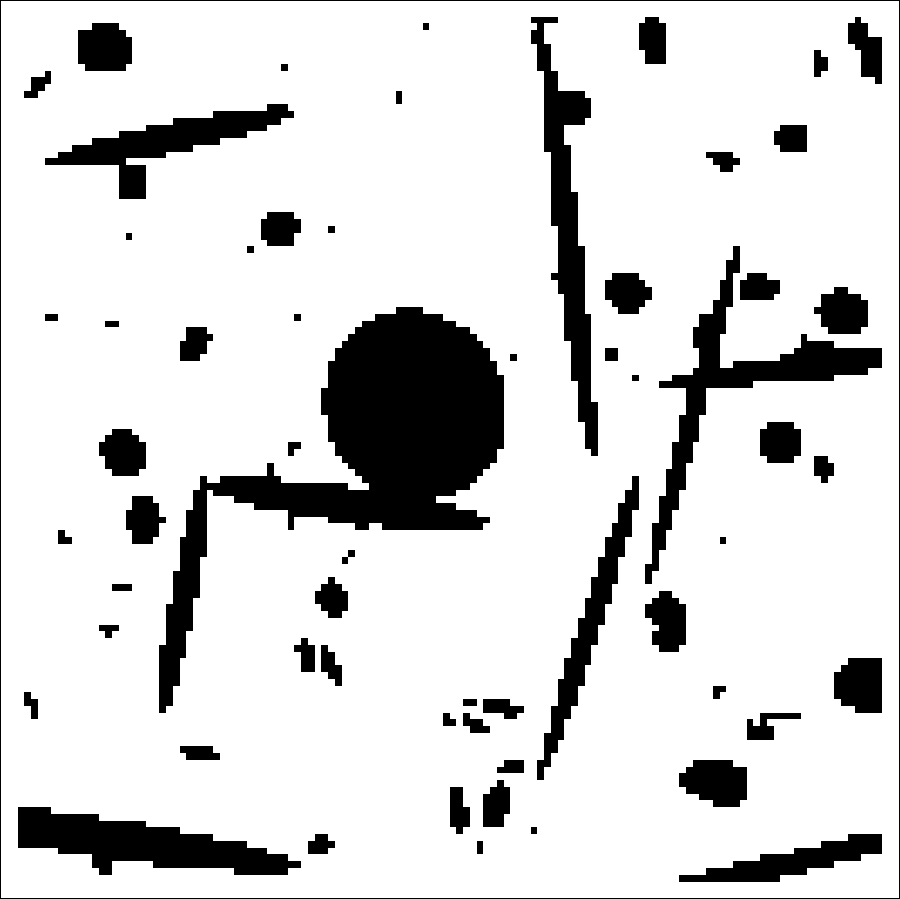}
\label{fig:islands_128}}
\hspace{10mm}
\subfigure[$512\times 512$ mesh]{
\includegraphics[width=0.21\textwidth]{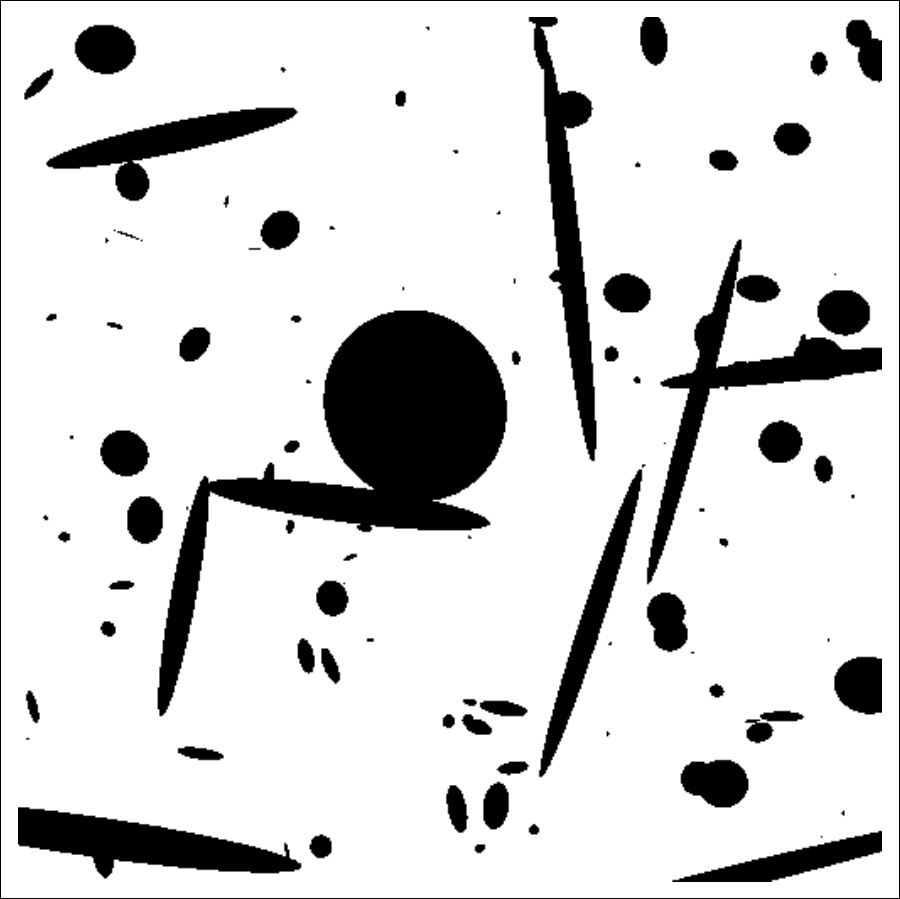}
\label{fig:islands_512}}
\caption{Binary distribution of the permeability $K(x)$ corresponding to test case [a]}
\label{fig:islands_binary}
 \end{center}
\end{figure}

\begin{figure}[hb]
\begin{center}
\subfigure[$32\times 32$ mesh]{
\includegraphics[width=0.21\textwidth]{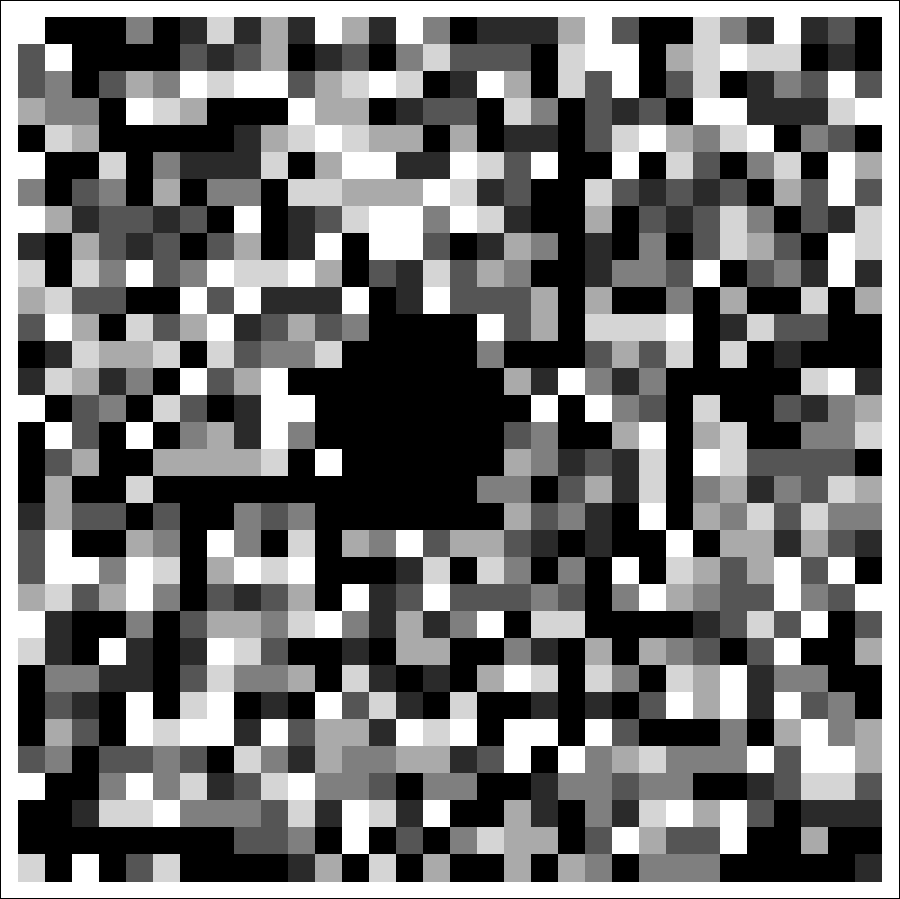}
\label{fig:islands_32r}}
\hspace{10mm}
\subfigure[$128\times 128$ mesh]{
\includegraphics[width=0.21\textwidth]{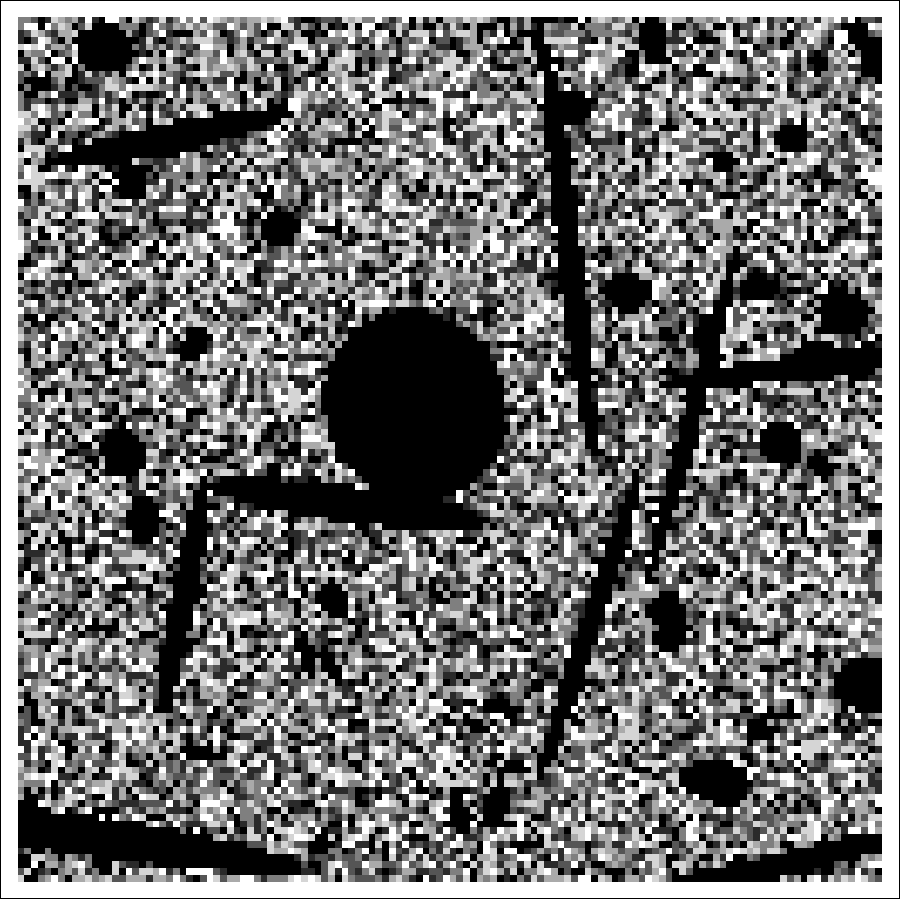}
\label{fig:islands_128r}}
\hspace{10mm}
\subfigure[$512\times 512$ mesh]{
\includegraphics[width=0.21\textwidth]{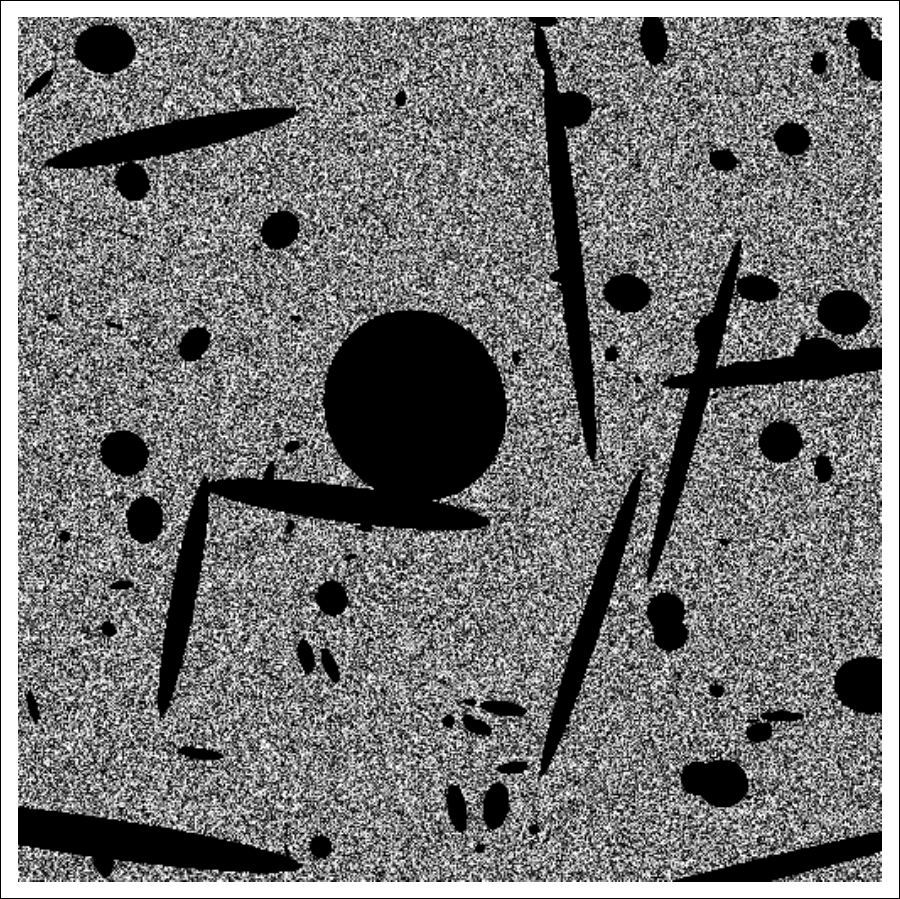}
\label{fig:islands_512r}}
\caption{Random distribution of  $\al=K^{-1}(x)$ corresponding to test case [b]}
\label{fig:islands_random}
 \end{center}
\end{figure}

\begin{figure}[hb]
\begin{center}
\subfigure[Slice 44]{
\includegraphics[width=0.28\textwidth]{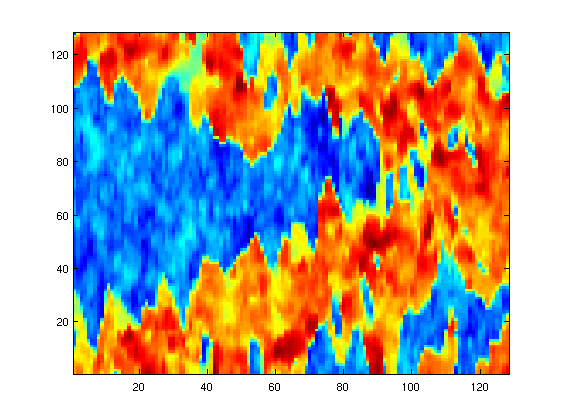}
\label{fig:slice44}}
\hspace{3mm}
\subfigure[Slice 54]{
\includegraphics[width=0.28\textwidth]{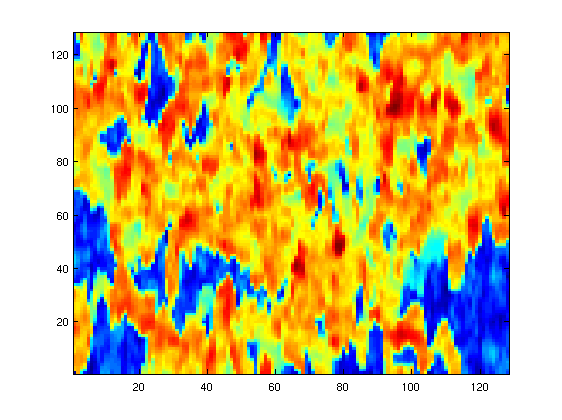}
\label{fig:slice54}}
\hspace{3mm}
\subfigure[Slice 74]{
\includegraphics[width=0.28\textwidth]{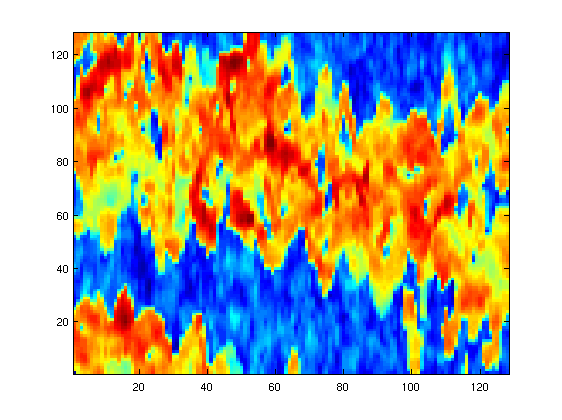}
\label{fig:slice74}}
\caption{Distributions of the permeability $K(x)$ along planes $x_3=44, 54, 74$ from
the benchmark SPE10 on a $ 128 \times 128 $ mesh}
\label{fig:spe_10}
 \end{center}
\end{figure}

The numerical experiments were performed over a uniform mesh consisting of $N{\times}N$ square elements 
where $N=4,8,\ldots,512$, i.e. up to $525 312$ velocity DOF and $262 144$ pressure DOF.
We have used a direct method to solve the problems on the coarsest grid.
The iterative process has been initialised with a random vector.
Its convergence has been tested for linear systems with the right hand side zero
{except in the last example where we have solved the mixed system
\eqref{equation-1} for slice 44 of the SPE10 problem with the right hand side
\begin{equation}\label{eq:rhs}
f=\left\{
\begin{array}{rcl}
c & \mbox{for} & (x,y) \in \Omega_{+}=[0.2,0.3]{\times}[0.7,0.8] \\
-c & \mbox{for} & (x,y) \in \Omega_{-}=[0.7,0.8]{\times}[0.2,0.3] \\
0 & \mbox{for} & (x,y) \in \Omega \setminus (\Omega_{+} \cup \Omega_{-})
\end{array}
\right.
\end{equation}
}

We have used overlapping coverings of the domain as shown in Figure~\ref{fig:covering},
where the subdomains are composed of $8\times 8$ elements and overlap with half their
width or height.
When presenting results we use the following notation:
\begin{itemize}
\item $\ell$ denotes the number of levels;
\item $q=\frac{\log \kappa}{\log 10}$ is the logarithm of the contrast $\kappa$;
\item $n_{ASMG}$ is the number of auxiliary space multigrid iterations;
\item $m\geq 0$ is the number of
point Gauss-Seidel pre- and post-smoothing steps;
{
\item $\rho_r$ is the average residual reduction factor defined by
\begin{equation}\label{eq:average_factor}
\rho_r=\Bigg(\frac{\Vert \vek{r}_{n_{ASMG}} \Vert}{\Vert \vek{r}_0 \Vert}\Bigg)^{1/n_{ASMG}},
\end{equation}
where
$\vek{u}_{n_{ASMG}}$ is the
first iterate (approximate solution of \eqref{algebraic-hdiv}) for which the initial residual
has decreased by a factor of at least $10^8$;
\item $\rho_e := \Vert I - {{C}^{(0)}}^{-1} A^{(0)} \Vert_{A^{(0)}}$ is the norm of the error propagation matrix of the
linear V-cycle preconditioner~\eqref{multigrid_preconditioner_0} which is obtained by choosing
the polynomial $p_\nu(t)=1-t$ in \eqref{eq:poly1}.
}
\end{itemize}
The matrix $\widetilde{D}$ is as in~\eqref{eq:tilde_D} where
$\widetilde{D}_{11}=\widetilde{A}_{11}$. This choice of $\widetilde{D}$ requires an
additional preconditioner for the iterative solution of linear systems with the matrix
$D=R \widetilde{D} R^T$ a part of the efficient application of the operator
$\Pi_{\widetilde{D}}$. The systems with $D$ are solved using the preconditioned conjugate
gradient (PCG) method. The stopping criterion for this inner iterative process is a
residual reduction by a factor $10^6$, the number of PCG iterations to reach it--where
reported--is denoted by $n_i$. The preconditioner $B_{ILUE}$ for $D$ is constructed
using incomplete factorisation with exact local factorisation (ILUE).
The definition of $B_{ILUE}$ is as follows:
$$
B_{ILUE}:=LU,\qquad U:=\sum_{i=1}^n R_i^T U_i R_i,\qquad L:=U^T {\rm diag}(U)^{-1},
$$
where
$$
D_i=L_i U_i,\qquad D=\sum_{i=1}^n R_i^T D_i R_i, \qquad {\rm diag}(L_i)=I.
$$
For details see~\cite{Kraus2009robust}.
Note that as $D_i$ are the local contributions to $D$ related to the subdomains
$\Omega_i$, $i=1,\dots,n$, they are all non-singular.

The following two sections are devoted to the presentation of numerical results.
Experiments fall into two categories.
{
The first category, presented in Section~\ref{sec:Hdiv}, serves the evaluation of the
performance of the ASMG method on linear systems arising from discretisation of the
weighted $\bH(\ddiv)$ bilinear form~\eqref{WH-div-prod}. All three test cases, [a], [b],
and [c], are considered testing V- and W-cycle methods with and without smoothing.
Additionally, we evaluate as a robustness indicator the quantity
$\Vert \pi_{\widetilde{D}} \Vert^2_{\widetilde{A}}=\Vert R^T \Pi_{\widetilde{D}} \Vert^2_{\widetilde{A}}$,
which appears in \eqref{eq:relBA} and \eqref{eq:pi}.
}

The second category of experiments, discussed in Section~\ref{sec:saddle_point_sys},
addresses the solution of the indefinite linear system~\eqref{eq:saddle_point_sys-intro}
arising from problem~\eqref{eq:dual_mixed} by a preconditioned MinRes method.
The main aims are, on the one hand, to confirm the robustness of the block-diagonal
preconditioner~\eqref{eq:AFW_preconditioner} with respect to arbitrary multiscale
coefficient variations, and on the other, to demonstrate its numerical scalability.
Furthermore, we include a test problem with a nonzero right hand side.

\subsection{Numerical tests for solving the system~\eqref{algebraic-hdiv}}\label{sec:Hdiv}
An ASMG preconditioner was tested for solving the system~\eqref{algebraic-hdiv} with
a matrix corresponding to discretisation of the form $\Lambda_\alpha(\bu,\bv)$.

{
\begin{example} \label{ex:0}
First we estimate $\Vert\pi_{\widetilde{D}}\Vert^2_{\widetilde{A}}$ at
the level of the finest mesh, level $0$, for increasing contrast of
magnitude $10^q$ in the configuration of case [b].
\end{example}

The experimental study is based on Lemma \ref{lem:key}.
The quantity of interest in Example~\ref{ex:0} provides an upper bound
for the condition number $\kappa(C^{-1}A)$ for the two-level
preconditioner~\eqref{eq:two-grid_0}. The results shown in
Table~\ref{table:norm_p_d_tilda} clearly demonstrate 
that $\kappa(C^{-1}A)$ is uniformly bounded.

This test indicates the robustness of the fictitious space
preconditioner with respect to a highly varying coefficient on multiple
length scales. 
Note that test case [b] is considered to be
representative, taking into account the iteration counts for the test
cases [a] and [b] as presented in the following examples.
}

\begin{table}[ht!]
\begin{center}
\begin{tabular}{c|  c |  c | c  | c |  c }
 \multicolumn {6}{c}{Value of $\Vert\pi_{\widetilde{D}}\Vert^2_{\widetilde{A}}$: bilinear form~\eqref{WH-div-prod}} \\ \multicolumn{6}{c}{~} \\
 & $\ell=3$ & $\ell=4$ & $\ell=5$ & $\ell=6$ & $\ell=7$ \\
\cline{2-6}
\hline 
$q = 0$   & $1.122$  & $1.137$  & $1.148$  & $1.150$  & $1.149$ \\
$q = 1$   & $1.148$  & $1.169$  & $1.149$  & $1.152$  & $1.138$ \\
$q = 2$   & $1.286$  & $1.338$  & $1.360$  & $1.287$  & $1.126$ \\
$q = 3$   & $1.336$  & $1.389$  & $1.418$  & $1.326$  & $1.132$ \\
$q = 4$   & $1.343$  & $1.396$  & $1.426$  & $1.334$  & $1.133$ \\
$q = 5$   & $1.343$  & $1.397$  & $1.426$  & $1.333$  & $1.369$ \\
$q = 6$   & $1.343$  & $1.397$  & $1.426$  & $1.333$  & $1.369$ \\
\end{tabular} \vspace{2ex}
\caption{Example~\ref{ex:0}: case [a] with $K(x)=10^q$}\label{table:norm_p_d_tilda_}
\end{center}
\end{table}

\vspace{-1ex}

\begin{table}[ht!]
\begin{center}
\begin{tabular}{c|  c |  c | c  | c |  c }
 \multicolumn {6}{c}{Value of $\Vert\pi_{\widetilde{D}}\Vert^2_{\widetilde{A}}$: bilinear form~\eqref{WH-div-prod}} \\ \multicolumn{6}{c}{~} \\
 & $\ell=3$ & $\ell=4$ & $\ell=5$ & $\ell=6$ & $\ell=7$ \\
\cline{2-6}
\hline 
$q = 0$   & $1.122$  & $1.137$  & $1.148$  & $1.150$  & $1.149$ \\
$q = 1$   & $1.115$  & $1.126$  & $1.169$  & $1.167$  & $1.123$ \\
$q = 2$   & $1.126$  & $1.208$  & $1.119$  & $1.146$  & $1.112$ \\
$q = 3$   & $1.014$  & $1.261$  & $1.338$  & $1.334$  & $1.110$ \\
$q = 4$   & $1.260$  & $1.295$  & $1.371$  & $1.434$  & $1.110$ \\
$q = 5$   & $1.268$  & $1.329$  & $1.394$  & $1.493$  & $1.145$ \\
$q = 6$   & $1.255$  & $1.374$  & $1.412$  & $1.139$  & $1.113$ \\
\end{tabular} \vspace{2ex}
\caption{Example~\ref{ex:0}: case [b] with $K(x)=10^q$}\label{table:norm_p_d_tilda}
\end{center}
\end{table}

{
\begin{example} \label{ex:1}
Next we are interested in the convergence factor in $A$-norm of the linear V-cycle method, that is,
we evaluate the quantity $\rho_e := \Vert I - {{C}^{(0)}}^{-1} A^{(0)} \Vert_{A^{(0)}}$. Moreover, we 
compare $\rho_e$ to the corresponding value of the average residual reduction factor $\rho_r$ defined
according to \eqref{eq:average_factor}. We also report the number of iterations $it_e$ that reduce the
initial error in $A$-norm by a factor $10^{8}$ and the number of iterations $it_r$ that reduce the Euclidean
norm of the initial residual by the same factor. The problem configuration is test case [c] for a zero right
hand side.
\end{example}

The results for Example~\ref{ex:1} are summarised in Table~\ref{table:linear_V_cycle}. Although the
average residual reduction factor $\rho_r$ is much smaller than $\rho_e$, the error reduction in the 
$A$-norm is also surprisingly good, especially in view of the linear V-cycle performing 
without any additional smoothing, i.e., implementing Algorithm~\ref{algorithm1}.

}


\begin{table}[ht!]
\begin{center}
\begin{tabular}{c|  c | c | c  | c  }
\multicolumn {5}{c}{Linear V-cycle: bilinear form~\eqref{WH-div-prod}} \\ \multicolumn{5}{c}{~} \\
 & $\rho_e$ & $it_e $ & $\rho_r $ & $it_r $ \\
\cline{2-5}
\hline 
$\ell = 4$   & $0.105$  & $7$  & $0.031$  & $6$   \\
$\ell = 5$   & $0.289$  & $9$  & $0.095$  & $8$   \\
$\ell = 6$   &  $0.494$  & $12$  & $0.168$ & $11$   \\
$\ell = 7$   & $~~0.642~~$  & $~~~14~~~$  & $~~0.215~~$  & $~~~12~~~$   \\
$\ell = 8$   & $0.729$  & $17$  & $0.262$  & $14$   \\
\end{tabular} \vspace{2ex}
\caption{Example~\ref{ex:1}: case [c] - slice 44 of the SPE10 benchmark.}\label{table:linear_V_cycle}
 \end{center}
\end{table}


{
\begin{example}\label{ex:2}
Now we test the nonlinear V-cycle and the effect of additional smoothing. 
Again the problem configuration is test case [c] for a zero right hand side. We report the
number of nonlinear AMLI-cycle ASMG iterations with Algorithm~\ref{algorithm1} denoted
by $n_{ASMG}$ for a residual reduction by eight orders of magnitude along with $\rho_r$.
\end{example}

Comparing the results for Example~\ref{ex:2}, which are listed in Table~\ref{table:non_linear_V_cycle},
with those in Table~\ref{table:linear_V_cycle} shows that the nonlinear V-cycle typically also reduces the
residual norm faster than the linear V-cycle--for the reduction of the $A$-norm of the error this is a known
fact--and the additional incorporation of a point Gauss-Seidel relaxation further accelerates the convergence. 

}


\begin{table}[ht!]
\begin{center}
\begin{tabular}{c| c  c | c  c | c c   }
 \multicolumn {7}{c}{Non-linear V-cycle: bilinear form~\eqref{WH-div-prod}} \\
\multicolumn{1}{c}{~} & \multicolumn{2}{ c |}{$m=0$} & \multicolumn{2}{c|}{$m=1$}
& \multicolumn{2}{c}{$m=2$} 
\\
\cline{2-7}
& $n_{ASMG}$ & $\rho$ & $n_{ASMG}$ & $\rho$ & $n_{ASMG}$ & $\rho$   \\
\hline 
$\ell = 4$   & $6$ & $0.032$  & $5$ & $0.025$  & $6$  & $0.027$   \\
$\ell = 5$   & $8$ & $0.093$  & $7$ & $0.062$  & $6$  & $0.045$   \\
$\ell = 6$   & $11$ & $0.157$  & $8$ & $0.091$  & $8$ & $0.083$   \\
$\ell = 7$   & $12$ & $0.202$  & $9$ & $0.123$  & $8$ & $0.094$   \\
$\ell = 8$   & $14$ & $0.243$  & $11$ & $0.172$  & $10$ & $0.154$   \\
\end{tabular} \vspace{2ex}
\caption{Example~\ref{ex:2}: case [c] - slice 44 of the SPE10 benchmark.}\label{table:non_linear_V_cycle}
 \end{center}
\end{table}


\begin{example}\label{ex:3}
The next example tests the dependency of the convergence rate with respect to the contrast.
The configuration is test case [a] containing a zero right hand side and number of smoothing steps 
$m=2$.
\end{example}
Results in Table~\ref{table:a_bilinear_alg1_V_m2} show a slight increase in $\rho_r$ with increasing
contrast.
\begin{table}[h!]
 \begin{center}
 \begin{tabular}{c| c  c | c  c | c c  | c c | c  c }
 \multicolumn {11}{c}{ASMG V-cycle: bilinear form~\eqref{WH-div-prod},
Algorithm~\ref{algorithm1}} \\
\multicolumn{1}{c}{~} & \multicolumn{2}{ c |}{$\ell=3$} & \multicolumn{2}{c|}{$\ell=4$}
& \multicolumn{2}{c|}{$\ell=5$} & \multicolumn{2}{c|}{$\ell=6$}
& \multicolumn{2}{c}{$\ell=7$} 
\\
\cline{2-11}
& $n_{ASMG}$ & $\rho$ & $n_{ASMG}$ & $\rho$ & $n_{ASMG}$ & $\rho$   &   $n_{ASMG}$ & $\rho$ & $n_{ASMG}$ & $\rho$ \\ 
\hline 
$q = 0$   & $4$ & $0.005$  & $5$ & $0.024$  & $6$ & $0.043$  & $8$ & $0.083$  & $8$  & $0.093$ \\
$q = 1$   & $3$ & $0.002$  & $5$ & $0.022$  & $7$ & $0.058$  & $8$ & $0.084$  & $9$  & $0.121$ \\
$q = 2$   & $3$ & $0.002$  & $5$ & $0.019$  & $7$ & $0.068$  & $8$ & $0.091$  & $9$  & $0.121$ \\
$q = 3$   & $3$ & $0.002$  & $5$ & $0.018$  & $7$ & $0.070$  & $8$ & $0.095$  & $9$  & $0.125$ \\
$q = 4$   & $3$ & $0.002$  & $5$ & $0.017$  & $7$ & $0.069$  & $8$ & $0.098$  & $10$ & $0.142$ \\
$q = 5$   & $3$ & $0.002$  & $5$ & $0.017$  & $8$ & $0.082$  & $9$ & $0.118$  & $10$ & $0.145$ \\
$q = 6$   & $4$ & $0.005$  & $4$ & $0.010$  & $8$ & $0.092$  & $9$ & $0.125$  & $11$ & $0.181$ \\
\end{tabular} \vspace{2ex}
\caption{Example~\ref{ex:3}: case [a] with $K(x)=10^q$ and two smoothing steps ($m=2$).}\label{table:a_bilinear_alg1_V_m2}
\end{center}
\end{table}
\vspace{-6mm}
\begin{example}\label{ex:4}
In the next set of numerical experiments we consider the same distribution of inclusions of low permeability
as before but this time against the background of a randomly distributed piecewise constant permeability coefficient
as shown in Figure~\ref{fig:islands_random}. 
\end{example}
The results, presented in Tables~\ref{table:b_bilinear_alg1_V_m0} and~\ref{table:b_bilinear_alg1_V_m2}, are even
better than those obtained for the binary distribution in the sense that both the $V$- and $W$-cycle are robust
with respect to the contrast.
\begin{table}[h!]
 \begin{center}
 \begin{tabular}{c| c  c | c  c | c c  | c c | c  c }
 \multicolumn {11}{c}{ASMG V-cycle: bilinear form~\eqref{WH-div-prod},
Algorithm~\ref{algorithm1}} \\
\multicolumn{1}{c}{~} & \multicolumn{2}{ c |}{$\ell=3$} & \multicolumn{2}{c|}{$\ell=4$}
& \multicolumn{2}{c|}{$\ell=5$} & \multicolumn{2}{c|}{$\ell=6$}
& \multicolumn{2}{c}{$\ell=7$} 
\\
\cline{2-11}
& $n_{ASMG}$ & $\rho$ & $n_{ASMG}$ & $\rho$ & $n_{ASMG}$ & $\rho$   &   $n_{ASMG}$ & $\rho$ & $n_{ASMG}$ & $\rho$ \\
\hline 
$q = 0$   & $4$ & $0.007$  & $6$ & $0.027$  & $9$  & $0.102$  & $10$ & $0.156$  & $12$ & $0.210$ \\
$q = 1$   & $4$ & $0.006$  & $6$ & $0.035$  & $9$  & $0.103$  & $11$ & $0.171$  & $13$ & $0.224$ \\
$q = 2$   & $4$ & $0.005$  & $6$ & $0.032$  & $9$  & $0.102$  & $11$ & $0.159$  & $13$ & $0.222$ \\
$q = 3$   & $4$ & $0.006$  & $6$ & $0.042$  & $9$  & $0.110$  & $11$ & $0.174$  & $13$ & $0.229$ \\
$q = 4$   & $4$ & $0.006$  & $7$ & $0.043$  & $9$  & $0.127$  & $11$ & $0.183$  & $13$ & $0.233$ \\
$q = 5$   & $4$ & $0.006$  & $7$ & $0.049$  & $10$ & $0.138$  & $12$ & $0.195$  & $13$ & $0.239$ \\
$q = 6$   & $4$ & $0.006$  & $7$ & $0.056$  & $10$ & $0.149$  & $12$ & $0.207$  & $14$ & $0.252$ \\
\end{tabular} \vspace{2ex}
\caption{Example~\ref{ex:4}: case [b], no smoothing steps ($m=0$).}\label{table:b_bilinear_alg1_V_m0}
 \end{center}
\end{table}
\vspace{-1ex}
\begin{table}[ht!]
 \begin{center}
 \begin{tabular}{c| c  c | c  c | c c  | c c | c  c }
 \multicolumn {11}{c}{ASMG V-cycle: bilinear form~\eqref{WH-div-prod}, Algorithm~\ref{algorithm1}} \\
\multicolumn{1}{c}{~} & \multicolumn{2}{ c |}{$\ell=3$} & \multicolumn{2}{c|}{$\ell=4$}
& \multicolumn{2}{c|}{$\ell=5$} & \multicolumn{2}{c|}{$\ell=6$}
& \multicolumn{2}{c}{$\ell=7$} 
\\
\cline{2-11}
& $n_{ASMG}$ & $\rho$ & $n_{ASMG}$ & $\rho$ & $n_{ASMG}$ & $\rho$   &   $n_{ASMG}$ & $\rho$ & $n_{ASMG}$ & $\rho$ \\
\hline 
$q = 0$   & $4$ & $0.005$  & $5$ & $0.024$  & $6$ & $0.046$  & $8$ & $0.083$  & $8$  & $0.091$ \\
$q = 1$   & $4$ & $0.005$  & $6$ & $0.033$  & $7$ & $0.060$  & $8$ & $0.091$  & $9$  & $0.124$ \\
$q = 2$   & $3$ & $0.002$  & $5$ & $0.023$  & $6$ & $0.045$  & $7$ & $0.069$  & $9$  & $0.121$ \\
$q = 3$   & $3$ & $0.002$  & $5$ & $0.021$  & $6$ & $0.043$  & $7$ & $0.071$  & $8$  & $0.100$ \\
$q = 4$   & $4$ & $0.005$  & $5$ & $0.023$  & $6$ & $0.044$  & $8$ & $0.089$  & $9$  & $0.122$ \\
$q = 5$   & $4$ & $0.005$  & $5$ & $0.024$  & $6$ & $0.045$  & $8$ & $0.090$  & $9$  & $0.125$ \\
$q = 6$   & $4$ & $0.005$  & $6$ & $0.034$  & $6$ & $0.045$  & $8$ & $0.091$  & $10$ & $0.142$ \\
\end{tabular} \vspace{2ex}
\caption{Example~\ref{ex:4}: case [b] with two smoothing steps ($m=2$).}\label{table:b_bilinear_alg1_V_m2}
 \end{center}
\end{table}

\vspace{-1ex}

\begin{table}[ht!]
 \begin{center}
 \begin{tabular}{c| c  c | c  c | c c  | c c | c  c }
 \multicolumn {11}{c}{ASMG W-cycle: bilinear form~\eqref{WH-div-prod}, Algorithm~\ref{algorithm1}} \\
\multicolumn{1}{c}{~} & \multicolumn{2}{ c |}{$\ell=3$} & \multicolumn{2}{c|}{$\ell=4$}
& \multicolumn{2}{c|}{$\ell=5$} & \multicolumn{2}{c|}{$\ell=6$}
& \multicolumn{2}{c}{$\ell=7$} 
\\
\cline{2-11}
& $n_{ASMG}$ & $\rho$ & $n_{ASMG}$ & $\rho$ & $n_{ASMG}$ & $\rho$   &   $n_{ASMG}$ & $\rho$ & $n_{ASMG}$ & $\rho$  \\
\hline 
$q = 0$   & $4$ & $0.005$  & $4$ & $0.007$  & $4$ & $0.006$  & $4$ & $0.005$  & $4$ & $0.005$ \\
$q = 1$   & $4$ & $0.006$  & $4$ & $0.007$  & $4$ & $0.007$  & $4$ & $0.006$  & $4$ & $0.005$ \\
$q = 2$   & $4$ & $0.004$  & $4$ & $0.009$  & $5$ & $0.016$  & $4$ & $0.007$  & $4$ & $0.006$ \\
$q = 3$   & $4$ & $0.005$  & $5$ & $0.015$  & $5$ & $0.015$  & $4$ & $0.009$  & $4$ & $0.006$ \\
$q = 4$   & $4$ & $0.005$  & $5$ & $0.016$  & $5$ & $0.016$  & $4$ & $0.009$  & $4$ & $0.008$ \\
$q = 5$   & $4$ & $0.005$  & $5$ & $0.018$  & $5$ & $0.015$  & $4$ & $0.009$  & $4$ & $0.008$ \\
$q = 6$   & $4$ & $0.005$  & $5$ & $0.019$  & $5$ & $0.015$  & $4$ & $0.008$  & $4$ & $0.007$ \\
\end{tabular} \vspace{2ex}
\caption{Example~\ref{ex:4}: case [b] with one smoothing step ($m=1$).}\label{table:b_bilinear_alg1_W_m1}
 \end{center}
\end{table}

\medskip
\begin{example} \label{ex:5}
The last set of experiments in the first category is devoted to test case~[c] where,
similarly to Example~\ref{ex:1}, we examine the performance of the preconditioner
for a bilinear form~\eqref{WH-div-prod}. Here, we compare the ASMG preconditioners
for three different coefficient distributions, namely slices 44, 54, and 74 of the SPE10 benchmark
problem. In this example the finest mesh is always composed of $256\times 256$ elements,
meaning that changing the number of levels $\ell$ refers to a different size of the
coarse-grid problem.
\end{example}
Tables~\ref{table:c44_bilinear_VW}--\ref{table:c74_bilinear_VW} report the number
of outer iterations $n_{ASMG}$ and the maximum number of inner iterations $n_i$ needed
to reduce the residual  with the matrix $R\widetilde{D}R^T$ by a  factor of~$10^6$.

\begin{table}[ht!]
 \begin{center}
 \begin{tabular}{c| c  c  c  | c c c | c c c | c c c }
 \multicolumn {13}{c}{ASMG V-cycle and W-cycle: bilinear form~\eqref{WH-div-prod}} \\
\multicolumn{1}{c}{~} & \multicolumn{6}{c|}{V-cycle} & \multicolumn{6}{c}{W-cycle}
\\
\cline{2-13}
 & \multicolumn{3}{ c |}{$m=0$} & \multicolumn{3}{c|}{$m=1$}
& \multicolumn{3}{c|}{$m=0$} & \multicolumn{3}{c}{$m=1$}\\
\cline{2-13}
& $n_{ASMG}$ & $\rho$ & $n_{i}$ & $n_{ASMG}$ & $\rho$ & $n_{i}$   &  
   $n_{ASMG}$ & $\rho$ & $n_{i}$ & $n_{ASMG}$ & $\rho$ & $n_{i}$  \\
\hline 
$\ell = 3$   & $8$  & $0.080$ & $4$ & $7$  & $0.064$ & $5$    &   $5$ & $0.019$ & $6$ & $5$ & $0.014$ & $5$  \\ 
$\ell = 4$   & $10$ & $0.157$ & $6$ & $9$  & $0.122$ & $6$    &   $5$ & $0.019$ & $6$ & $5$ & $0.014$ & $5$ \\ 
$\ell = 5$   & $12$ & $0.209$ & $6$ & $10$ & $0.154$ & $6$    & $5$ & $0.019$ & $6$ & $5$ & $0.014$ & $5$ \\
$\ell = 6$   & $13$ & $0.239$ & $6$ & $11$ & $0.179$ & $6$    & $5$ & $0.019$ & $6$ & $5$ & $0.014$ & $5$ \\ 
$\ell = 7$   & $13$ & $0.239$ & $6$ & $11$ & $0.179$ & $6$    & $5$ & $0.019$ & $6$ & $5$ & $0.014$ & $5$\\ 
\end{tabular} \vspace{2ex}
\caption{Example~\ref{ex:5}: case [c] - slice 44 of the SPE10 benchmark.}\label{table:c44_bilinear_VW}
\end{center}
\end{table}


\vspace{-1ex}

\begin{table}[ht!]
 \begin{center}
 \begin{tabular}{c| c  c  c  | c c c | c c c | c c c }
 \multicolumn {13}{c}{ASMG V-cycle and W-cycle: bilinear form~\eqref{WH-div-prod}} \\
\multicolumn{1}{c}{~} & \multicolumn{6}{c|}{V-cycle} & \multicolumn{6}{c}{W-cycle}
\\
\cline{2-13}
 & \multicolumn{3}{ c |}{$m=0$} & \multicolumn{3}{c|}{$m=1$}
& \multicolumn{3}{c|}{$m=0$} & \multicolumn{3}{c}{$m=1$}\\
\cline{2-13}
& $n_{ASMG}$ & $\rho$ & $n_{i}$ & $n_{ASMG}$ & $\rho$ & $n_{i}$   &  
   $n_{ASMG}$ & $\rho$ & $n_{i}$ & $n_{ASMG}$ & $\rho$ & $n_{i}$  \\
\hline 
$\ell = 3$   & $7$  & $0.070$ & $4$ & $7$  & $0.059$ & $4$    &  $5$ & $0.016$ & $4$ & $5$ & $0.013$ & $4$ \\ 
$\ell = 4$   & $10$ & $0.156$ & $5$ & $9$  & $0.122$ & $6$    &  $5$ & $0.017$ & $6$ & $5$ & $0.013$ & $5$ \\
$\ell = 5$   & $13$ & $0.236$ & $5$ & $11$ & $0.173$ & $6$    &   $5$ & $0.018$ & $6$ & $5$ & $0.013$ & $6$ \\
$\ell = 6$   & $14$ & $0.253$ & $5$ & $11$ & $0.183$ & $6$    &  $5$ & $0.018$ & $6$ & $5$ & $0.013$ & $6$ \\
$\ell = 7$   & $14$ & $0.253$ & $6$ & $11$ & $0.183$ & $6$    &  $5$ & $0.018$ & $6$ & $5$ & $0.013$ & $6$ \\
\end{tabular} \vspace{2ex}
\caption{Example~\ref{ex:5}: case [c] - slice 54 of the SPE10 benchmark.}\label{table:c54_bilinear_VW}
 \end{center}
\end{table}

\vspace{-1ex}

\begin{table}[ht!]
 \begin{center}
 \begin{tabular}{c| c  c  c  | c c c | c c c | c c c }
 \multicolumn {13}{c}{ASMG V-cycle and W-cycle: bilinear form~\eqref{WH-div-prod}} \\
\multicolumn{1}{c}{~} & \multicolumn{6}{c|}{V-cycle} & \multicolumn{6}{c}{W-cycle}
\\
\cline{2-13}
 & \multicolumn{3}{ c |}{$m=0$} & \multicolumn{3}{c|}{$m=1$}
& \multicolumn{3}{c|}{$m=0$} & \multicolumn{3}{c}{$m=1$}\\
\cline{2-13}
& $n_{ASMG}$ & $\rho$ & $n_{i}$ & $n_{ASMG}$ & $\rho$ & $n_{i}$  &  
    $n_{ASMG}$ & $\rho$ & $n_{i}$ & $n_{ASMG}$ & $\rho$ & $n_{i}$  \\
\hline 
$\ell = 3$   & $8$  & $0.090$ & $4$ & $7$  & $0.070$ & $4$    & $5$ & $0.019$ & $4$ & $5$ & $0.014$ & $4$  \\ 
$\ell = 4$   & $11$ & $0.178$ & $5$ & $10$ & $0.145$ & $5$    &  $5$ & $0.020$ & $5$ & $5$ & $0.015$ & $5$ \\
$\ell = 5$   & $13$ & $0.229$ & $5$ & $11$ & $0.166$ & $6$    & $5$ & $0.020$ & $6$ & $5$ & $0.015$ & $6$ \\
$\ell = 6$   & $13$ & $0.242$ & $6$ & $11$ & $0.180$ & $6$    & $5$ & $0.020$ & $6$ & $5$ & $0.015$ & $6$  \\
$\ell = 7$   & $13$ & $0.242$ & $6$ & $11$ & $0.180$ & $6$    & $5$ & $0.020$ & $6$ & $5$ & $0.015$ & $6$ \\
\end{tabular} \vspace{2ex}
\caption{Example~\ref{ex:5}: case [c] - slice 74 of the SPE10 benchmark.}\label{table:c74_bilinear_VW}
 \end{center}
\end{table}

\medskip
\subsection{Testing of block-diagonal preconditioner for system~\eqref{eq:saddle_point_sys-intro}
within MinRes iteration}\label{sec:saddle_point_sys}
Now we present a number of numerical experiments for solving the mixed finite element
system~\eqref{eq:saddle_point_sys-intro} with a preconditioned MinRes method.
We consider two different examples, firstly, Example~\ref{ex:6} in which the performance
of the block-diagonal preconditioner and its dependence on the accuracy of the inner
solves with W-cycle ASMG preconditioner is evaluated, and secondly, 
Example~\ref{ex:7}.
testing the scalability of the MinRes iteration, again using a W-cycle ASMG preconditioner
with one smoothing step for the inner iterations.
\begin{example}\label{ex:6}
  Here we apply the MinRes iteration to
  solve~\eqref{eq:saddle_point_sys-intro} for test case [c]. The hierarchy
  of meshes is the same as in Example~\ref{ex:2}.
An ASMG W-cycle
  based on Algorithm~\ref{algorithm1} with one smoothing step has been
  used as a preconditioner on the $\RT{0}$
  space. Table~\ref{table:c44_saddle_W_m1} shows the number of MinRes
  iterations denoted by $n_{MinRes}$, the
  maximum number of ASMG iterations $n_{ASMG}$ needed to achieve an
  ASMG residual reduction by $\varpi$.
\end{example}
%
\begin{table}[ht!]
 \begin{center}
 \begin{tabular}{c |  c  c  | c c | c c }
 \multicolumn {7}{c}{MinRes iteration: saddle point system~\eqref{eq:saddle_point_sys-intro}} \\
\multicolumn{1}{c}{~} & \multicolumn{2}{c|}{$\varpi=10^6$} & \multicolumn{2}{c|}{$\varpi=10^8$}
& \multicolumn{2}{c}{$\varpi=10^{10}$}
 \\
\cline{2-7}
&  $n_{MinRes}$ & $n_{ASMG}$ 
& $n_{MinRes}$ & $n_{ASMG}$  
& $n_{MinRes}$ & $n_{ASMG}$  
\\
\hline 
$\ell = 3$ & $24$ & $4$ 
& $17$ & $6$ 
& $15$ & $8$ 
\\ 
$\ell = 4$ & $15$ & $5$ 
& $13$ & $6$ 
& $13$ & $8$ 
\\
$\ell = 5$ & $21$ & $5$ 
& $17$ & $6$ 
& $15$ & $8$ 
\\
$\ell = 6$ & $22$ & $5$ 
& $17$ & $6$ 
& $15$ & $8$ 
\\
$\ell = 7$ & $22$ & $5$ 
& $17$ & $6$ 
& $15$ & $8$ 
\\
\end{tabular} \vspace{2ex}
\caption{Example~\ref{ex:6}: case [c] - slice 44 of the SPE10 benchmark. The hierarchy
of meshes is the same as in Example~\ref{ex:5}.}\label{table:c44_saddle_W_m1}
\end{center}
\end{table}

\begin{example}\label{ex:7}
In this set of experiments the MinRes iteration has been used to
solve~\eqref{eq:saddle_point_sys-intro} for test case~[c] for the same hierarchy of
meshes as in Example~\ref{ex:1}. An ASMG W-cycle based on Algorithm~\ref{algorithm1} with
one smoothing step has been used as a preconditioner on the $\RT{0}$ space for a residual
reduction by $10^8$. Table~\ref{table:c44_saddle_W_m1_m} shows the number of MinRes
iterations $n_{MinRes}$,
the maximum number of inner ASMG iterations $n_{ASMG}$ per outer MinRes iteration,
and the number of DOF. Note that so long as the product $n_{MinRes}n_{ASMG}$ is constant,
the total number of arithmetic operations required to achieve any prescribed
accuracy is proportional to the number of DOF.
\end{example}

\begin{table}[ht!]
 \begin{center}
 \begin{tabular}{c|r|cc|cc}
 \multicolumn{6}{c}{ MinRes iteration: saddle point system~\eqref{eq:saddle_point_sys-intro}} \\
 & &          & {zero r.h.s.}&  & {nonzero r.h.s.}   \\
\hline
 & {DOF}&  $n_{MinRes}$ & $n_{ASMG}$ &  $n_{MinRes}$ & $n_{ASMG}$  \\
\hline 
$\ell = 4$ &  $3,136$ & $13$ & $5$ & $13$   &  $5$    \\
$\ell = 5$ & $12,416$ & $13$ & $6$ & $14$  &  $6$    \\ 
$\ell = 6$ & $49,408$ & $15$ & $6$ & $17$ & $6$ 
\\
$\ell = 7$ & $197,120$ & $17$ & $6$ & $17$  & $6$ 
\\
$\ell = 8$ & $787,456$  & $17$ & $6$ & $18$ & $6$ 
\\
\end{tabular} \vspace{2ex}
\caption{ Example~\ref{ex:7}: case [c] - slice 44 of the SPE10
benchmark. }  
\label{table:c44_saddle_W_m1_m}
\end{center}
\end{table}

\subsection{Comments regarding the numerical experiments and some general conclusions}

The presented numerical results clearly demonstrate the efficiency of the proposed algebraic
multilevel iteration (AMLI)-cycle auxiliary space multigrid (ASMG) preconditioner
for problems with highly varying coefficients
as they typically arise in the mathematical modelling of physical processes in high-contrast
and high-frequency media.

During the first tests we evaluated the quantity 
$\Vert \pi_{\widetilde{D}} \Vert^2_{\widetilde{A}}$,
 which by  Lemma \ref{lem:key}  provides an upper 
bound for the condition number $\kappa(C^{-1}A)$. Then the convergence factor in A-norm of 
the linear V-cycle method was numerically studied.  The above reported results show 
robustness with respect to a highly varying coefficient on multiple length scales.  
They also confirm that the nonlinear V-cycle reduces the residual norm faster than the 
linear V-cycle.

The next group of tests examines the convergence behaviour of the nonlinear ASMG method
for the weighted bilinear form~\eqref{WH-div-prod}.
This is a key point in the presented study.
Cases~[a] and~[b] are designed to represent a typical multiscale geometry with
islands and channels. Although case~[b], a background with a random coefficient,
appears to be more complicated, the impact of the multiscale heterogeneity seems to
be stronger in binary case~[a] where the number of iterations is slightly larger.
However, in both cases we observe a uniformly converging ASMG V-cycle
with $m=2$ and W-cycle ($\nu=2$) with $m=1$. 
Case~[c] (SPE10) is a benchmark problem in the petroleum engineering
community. Here we observe robust and uniform convergence with respect
to the number of levels $\ell$, or, equivalently, mesh-size $h$. Note
that such uniform convergence is recorded for the ASMG V-cycle even
without smoothing iterations (i.e.  $m=0$). 

The results presented in
Table~\ref{table:c44_saddle_W_m1}--\ref{table:c44_saddle_W_m1_m},
confirm the expected optimal convergence rate
of the block-diagonally preconditioned MinRes iteration applied
to the coupled saddle point system~\eqref{eq:saddle_point_sys-intro}.
Results in Table~\ref{table:c44_saddle_W_m1} demonstrate
how the efficiency (in terms of the product $n_{MinRes} n_{ASMG}$) is achieved
for a relative accuracy of $10^{-8}$ of the inner ASMG solver.
Table \ref{table:c44_saddle_W_m1_m} illustrates the scalability of the solver
indicated by an almost constant number of  MinRes and ASMG
iterations since the total computational work 
in terms of fine grid matrix vector multiplications is proportional to
the product $n_{MinRes}n_{ASMG}$.
The case of a non-homogeneous right hand side  provides a promising indicator 
for robustness of the ASMG preconditioner beyond the frame of the presented theoretical
analysis.    

Although not in the scope of this study, we note that the proposed auxiliary space multigrid method would be suitable for implementation
on distributed memory computer architectures.

\section*{Acknowledgments}

The authors express their sincere thanks to the anonymous reviewers who made a number of
critical remarks and raised relevant questions that resulted in a revised and improved version of
the paper which is now shorter, clearer and more precise in the presentation of the main results. 

The authors sincerely thank Dr.~Shaun Lymbery for his contribution to editing the paper.

This work has been partially supported by the Bulgarian NSF Grant DCVP
02/1, FP7 Grant AComIn, and the Austrian NSF Grant P22989. R. Lazarov
has been supported in part by US NSF Grant DMS-1016525, by Award
No. KUS-C1-016-04, made by KAUST. L. Zikatanov has been supported in
part by US NSF Grants DMS-1217142 and DMS-1016525.

\bibliographystyle{abbrv}
\bibliography{thebib3}

\end{document}